\newtheorem{thm}{Theorem}[section]
\newtheorem{pro}[thm]{Proposition}
\newtheorem{lm}[thm]{Lemma}
\newtheorem{que}{Question}
\numberwithin{equation}{section}
\newtheorem{defn}[thm]{Definition}
\theoremstyle{remark}
\theoremstyle{remark}
\newtheorem{rem}[thm]{Remark}
\DeclareMathOperator*{\Ext}{Ext}
\DeclareMathOperator*{\Irr}{Irr}
\DeclareMathOperator*{\Gal}{Gal}
\DeclareMathOperator*{\trace}{Tr}
\DeclareMathOperator*{\Ind}{\textsf{Ind}}
\DeclareMathOperator*{\Res}{Res}
\DeclareMathOperator*{\scn}{sc}
\DeclareMathOperator*{\Hom}{Hom}
\newcommand{\vp}{\varphi}
\newcommand{\s}{\simeq}
\def\cS{\mathcal S}
\newcommand{\si}{\sigma}
\newcommand{\CC}{\mathbb{C}}
\newcommand{\NN}{\mathbb{N}}
\newcommand{\QQ}{\mathbb{Q}}
\newcommand{\bH}{\mathbf H}
\def\bG{\bold G}
\DeclareMathOperator*{\SL}{SL}
\DeclareMathOperator*{\GL}{GL}
\DeclareMathOperator*{\PGL}{PGL}
\title[Distinguished Representations for $\SL_n(D)$]{Distinguished Representations for $\SL_n(D)$ where $D$ is a quaternion division algebra over a $p$-adic field}
\author{Kwangho Choiy}
\address{School of Mathematical and Statistical Sciences, Southern Illinois University, Carbondale, IL 62901-4408} 
\email{kchoiy@siu.edu}
\author{Shiv Prakash Patel}
\address{Department of Mathematics, IIT Delhi, Hauz Khas, New Delhi - 110016} 
\address{Department of Mathematics, IIT Dharwad, Chikkamalligawad Village, Dharwad - 580011}
\email{shivprakashpatel@gmail.com}
\subjclass[2010]{Primary: 22E50; Secondary: 11S37, 20G25, 22E35}
\keywords{distinguished representation, multiplicity formula, non-quasi-split inner form, local Langlands correspondence}
\date{\today}  
\begin{document}

\begin{abstract}
Let $D$ be a quaternion division algebra over a non-archimedean local field $F$ of characteristic zero. 
Let $E/F$ be a quadratic extension and $\SL_{n}^{*}(E) = {\GL}_{n}(E) \cap \SL_{n}(D)$. 
We study distinguished representations of $\SL_{n}(D)$ by the subgroup $\SL_{n}^{*}(E)$.
Let $\pi$ be an irreducible admissible representation of $\SL_{n}(D)$ which is distinguished by $\SL_{n}^{*}(E)$. We give a  multiplicity formula, i.e. a formula  for the dimension of the $\CC$-vector space ${\Hom}_{\SL_{n}^{*}(E)} (\pi, \mathbbm{1})$, where $\mathbbm{1}$ denotes the trivial representation of $\SL_{n}^{*}(E)$. 
This work is a non-split inner form analog of a work by Anandavardhanan-Prasad which gives a  multiplicity formula for $\SL_{n}(F)$-distinguished irreducible admissible representation of $\SL_{n}(E)$.
\end{abstract}

\maketitle

\section{Introduction} \label{intro}
Let $H$ be a subgroup of a group $G$. An irreducible  representation $\pi$ of $G$ is said to be $(H, \chi)$-distinguished for a character $\chi$ of $H$ if $\Hom_{H}(\pi, \chi) \neq 0$. In other words, there exists $\ell : \pi \rightarrow \CC$ such that 
\[
\ell(\pi(h)v) = \chi(h) \ell(v)
\]
for all $h \in H$ and $v \in \pi$. 
If $\chi = \mathbbm{1}$ is the trivial character of $H$ and $\pi$ is $(H, \mathbbm{1})$-distinguished then we simply say  $\pi$ is $H$-distinguished or $\pi$ is distinguished by $H$. 
There are two main question in this direction. First, identifying the irreducible representations $\pi$ of $G$ which are $(H, \chi)$-distinguished. Second, identify the spcae $\Hom_{H}( \pi, \chi)$ if it is non-zero, in particular its dimension. Our discussion will focus on the pairs $(G,H)$ for which the space $\Hom_{H}( \pi, \chi)$ is finite dimensional for all irreducible representations $\pi$ and characters $\chi$ of $H$.

Motivated by Anandavardhana-Prasad's works \cite{ap03, ap18}, we consider a generalized setting. 
Given a $p$-adic group $G,$ we let $G_\flat$ be a subgroup of $G$ such that
\begin{equation}  \label{cond on G}
[G_\flat,  G_\flat]=[G,  G] \subseteq G_\flat \subseteq G,
\end{equation}  
where $[A, A]$ stands for the derived group of $A.$ Let $H$ be a subgroup of $G$ and set 
\[
H_{\flat} = H \cap G_{\flat}.
\]

Our study in general is to investigate the dimension of the $\CC$-vector space
\[
{\Hom}_{H_\flat} (\pi_{\flat}, \mathbbm{1})
\]
for any irreducible admissible representation $\pi_{\flat}$ of $G_\flat.$ 
The further detail for this set up can be found in Section \ref{section set-up}.

Let $E$ be a quadratic extension of $F$ and let, $D$ be a quaternion division algebra over $F$.
Then, $H=\GL_n(E) \hookrightarrow G=\GL_n(D)$ and this embedding is unique up to a conjugation by elements in $\GL_n(D)$, by Skolem-Noether theorem. We fix an embedding of $H$ in $G$ and write $H \subset G$.
For any character $\chi : H \rightarrow \CC^{\times}$, it is known 
that the dimension of the $\CC$-vector space $ {\Hom}_{H} (\pi, \chi)$ is less than or equal to 1
for any irreducible admissible representation $\pi$ of $G$ \cite{lu23}. 
A conjecture of Prasad-Takloo-Bighash \cite[Conjecture~1]{ptb11}, that is now a theorem \cite[Theorem~1.2]{sx24}, 
gives a criterion for the representations $\pi$ of $G$ and characters $\chi$ of $H$ for which the dimension of the space $ {\Hom}_{{\GL}_{n}(E)} (\pi, \chi)$ equals 1.

Let $\mathcal{N} : M_{n}(D) \rightarrow F$ be the reduced norm map. 
Let $\SL_{n}(D) = \{ g \in {\GL}_{n}(D) : \mathcal{N}(g)=1 \}$ and $\SL_{n}^{*}(E) = {\GL}_{n}(E) \cap \SL_{n}(D)$. 
Note that $\SL_{n}(E) \subset \SL_{n}^{*}(E)$. 
In the paper, we study the distinction of an irreducible representation of $\SL_{n}(D)$ by the subgroup $\SL_{n}^{*}(E)$. 
More precisely, the main question is the following.
\begin{que}
Let $\pi_{\flat}$ is an irreducible admissible representation of $\SL_{n}(D)$. What is the dimension of the $\CC$-vector space
\[
{\Hom}_{\SL_{n}^{*}(E)} (\pi_{\flat}, \mathbbm{1}) ~~?
\]
\end{que}

The above question is a non-split analog to the question considered by Anadavardhanan-Prasad, \cite{ap03, ap18}, where they compute the dimension of the $\CC$-vector space ${\Hom}_{\SL_{n}(F)} (\pi_{\flat}, \mathbbm{1})$, where $\pi_{\flat}$ is an irreducible admissible representation of $\SL_{n}(E)$.
Moreover, Anandavardhana-Prasad also classify the representations of $\SL_{n}(E)$ that are distinguished by $\SL_{n}(F)$.

Now we let $\pi_\flat$ be an irreducible admissible representation of $\SL_n(D)$ which is $\SL_n^*(E)$-distinguished. 
In Proposition \ref{key pro}, we prove that there exists an irreducible admissible representation $\pi$ of ${\GL}_n(D)$ which is ${\GL}_n(E)$-distinguished such that 
\[
\pi_\flat \subset {\Res}^{{\GL}_n(D)}_{\SL_n(D)} (\pi).
\]
To materialize the non-trivial multiplicity occurring in this restriction unlike the split case, we bring in the dimension of the irreducible representation $\rho_{\pi_\flat}$ of the finite group $\cS_{\vp, \scn}$ living in $L$-groups corresponding $\pi_\flat$ via the local Langlands correspondence for $\SL_n(D)$ \cite{gk82, tad92, hs11}. 
The relevant details will be reviewed in Section \ref{restrictionsection}.

Let $\Irr(G)$ denote the set of the isomorphism classes of irreducible admissible complex representations of $G$.
The main result in the paper (Theorem \ref{mainthem}) is the following.
\begin{thm} \label{mainthm-intro}
Let $\pi_{\flat}$ be an irreducible admissible representation of $\SL_{n}(D)$ which is $\SL_{n}^{*}(E)$-distinguished. 
Let $\pi$ be an irreducible admissible representation of ${\GL}_{n}(D)$ which is ${\GL}_{n}(E)$-distinguished such that $\pi_{\flat} \subset {\Res}^{{\GL}_n(D)}_{\SL_n(D)} (\pi)$.
Denote by $\rho_{\pi_\flat} \in \Irr\big(\cS_{\vp, \scn}({^L}{\SL}_n(D)^\circ), ~ \zeta_{{\SL}_n(D)} \big)$ via \eqref{bij sl} the internal structure of $L$-packet of $\SL_n(D)$, (refer to Section \ref{restrictionsection}).
Let  $\mathcal{C}_{\pi}$ be the number of irreducible representations of ${\GL}_n(D)^+ := {\SL}_n(D) \cdot {\GL}_n(E)$ containing $\pi_\flat$ in ${\Res}^{{\GL}_n(D)}_{{\GL}_n(D)^+} \pi.$
Then, we have
\begin{equation} \label{mainequation-intro}
{\dim}_{\CC} {\Hom}_{\SL_n^*(E)} (\pi_\flat,\mathbbm{1}) = \frac{|X_\pi|}{|Z_\pi/Y_\pi|} \cdot \frac{{\dim} \rho_{\pi_\flat}}{\mathcal{C}_{\pi}^2},
\end{equation} 
where $X_{\pi}, Y_{\pi}, Z_{\pi}$ are given in Definition \ref{def XYZ}.
\end{thm}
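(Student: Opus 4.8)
The plan is to lift the question to $\GL_n(D)$, where distinction by $\GL_n(E)$ twisted by a character is controlled by Lu's multiplicity-one theorem \cite{lu23} together with the Prasad--Takloo-Bighash criterion \cite[Theorem~1.2]{sx24}, and then descend through $\GL_n(D)\supset\GL_n(D)^+\supset\SL_n(D)$. With $\pi$ as in the statement (such $\pi$ exists by Proposition \ref{key pro}), I would first record the group theory: with $\mathcal{N}$ the reduced norm one has $\GL_n(D)/\SL_n(D)\cong F^\times$ and $\GL_n(E)/\SL_n^*(E)\cong N_{E/F}(E^\times)$, so $\GL_n(D)/\GL_n(D)^+\cong F^\times/N_{E/F}(E^\times)\cong\ZZ/2$. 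Hence every character of $\SL_n^*(E)\backslash\GL_n(E)$ is the restriction of $\mu\circ\mathcal{N}$ for a character $\mu$ of $F^\times$, the basic operation is twisting $\pi$ by $\mu\circ\mathcal{N}$, and the finite groups $X_\pi,Y_\pi,Z_\pi$ of Definition \ref{def XYZ} record which such twists preserve, respectively, $\GL_n(E)$-distinction, the isomorphism class of $\pi$, and the class of the relevant $\GL_n(D)^+$-constituent.

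\textbf{The averaging identity.} Since $(\SL_n(D),\SL_n^*(E))$ is a finite-multiplicity pair, $\Hom_{\SL_n^*(E)}(\Res^{\GL_n(D)}_{\SL_n(D)}\pi,\mathbbm{1})$ is a finite-dimensional smooth module for the abelian group $\GL_n(E)/\SL_n^*(E)$; decomposing it into isotypic lines (and checking this module is semisimple) and applying Frobenius reciprocity gives
\[
\dim_\CC\Hom_{\SL_n^*(E)}\big(\Res^{\GL_n(D)}_{\SL_n(D)}\pi,\mathbbm{1}\big)=\sum_{\chi}\dim_\CC\Hom_{\GL_n(E)}(\pi,\chi),
\]
the sum over characters $\chi$ of $\GL_n(E)/\SL_n^*(E)$. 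Each summand is $\le 1$ by \cite{lu23}, and \cite[Theorem~1.2]{sx24} identifies exactly the contributing $\chi$; since $\GL_n(E)$-distinction is unchanged under twisting by $\eta_{E/F}\circ\mathcal{N}$ (trivial on $\GL_n(E)$), the distinguished twists form a union of cosets of the self-twist group, and the two-to-one map $\widehat{F^\times}\twoheadrightarrow\widehat{N_{E/F}(E^\times)}$ must be accounted for --- this computes the right-hand side in terms of $|X_\pi|$ and $|Z_\pi/Y_\pi|$.

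\textbf{Descent through the $L$-packet.} To pass from $\Res^{\GL_n(D)}_{\SL_n(D)}\pi$ to the single constituent $\pi_\flat$ I would invoke the Hiraga--Saito local Langlands correspondence for $\SL_n(D)$ \cite{gk82,tad92,hs11}: $\Res^{\GL_n(D)}_{\SL_n(D)}\pi\cong\bigoplus_{\pi_\flat'}(\dim\rho_{\pi_\flat'})\,\pi_\flat'$ over the packet $\Pi_{\vp}(\SL_n(D))$, indexed through \eqref{bij sl} by the $\zeta_{\SL_n(D)}$-isotypic irreducibles $\rho_{\pi_\flat'}$ of $\cS_{\vp,\scn}$, with $\sum_{\pi_\flat'}(\dim\rho_{\pi_\flat'})^2=\dim_\CC\End_{\SL_n(D)}(\Res^{\GL_n(D)}_{\SL_n(D)}\pi)$ equal to the order of the self-twist group of $\pi$. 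This is exactly where the non-quasi-split inner form is felt: the $2$-cocycle on the self-twist group governing this restriction can be non-trivial, so $\dim\rho_{\pi_\flat'}>1$ can occur, whereas for the split $\SL_n(E)$ of Anandavardhanan--Prasad \cite{ap03,ap18} it vanishes. Feeding this into the averaging identity, establishing that the $\SL_n^*(E)$-distinguished members of the packet form one orbit on which $\dim_\CC\Hom_{\SL_n^*(E)}(-,\mathbbm{1})$ is constant (by matching the $\GL_n(D)$- and $\GL_n(E)$-conjugation actions against \eqref{bij sl}), and interposing $\GL_n(D)^+$ --- whose double restriction $\GL_n(D)\to\GL_n(D)^+\to\SL_n(D)$ is what produces the $\mathcal{C}_\pi^2$ in the denominator --- yields \eqref{mainequation-intro}.

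\textbf{Main obstacle.} The crux is the compatibility underlying that last step: the three inputs are each pinned down only up to a normalisation --- the \cite{sx24} distinction criterion for $\GL_n(D)$, Lu's multiplicity-one statement, and the Hiraga--Saito parametrisation of $\Pi_{\vp}(\SL_n(D))$ with its $\zeta_{\SL_n(D)}$-twisted (generally non-abelian) component group --- and one must show that the twisting action defining $X_\pi,Y_\pi,Z_\pi$, the internal $L$-packet structure recorded by $\rho_{\pi_\flat}$, and the $\ZZ/2$-layer giving $\mathcal{C}_\pi$ all fit together consistently. Once the distinguished members of the packet and their common multiplicity are identified, assembling \eqref{mainequation-intro} is just a matter of combining the averaging identity with the $\sum_{\pi_\flat'}(\dim\rho_{\pi_\flat'})^2$ counting and the Clifford-theory indices for $\GL_n(D)^+$.
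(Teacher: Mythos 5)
Your proposal follows essentially the same route as the paper's proof: the three pillars --- the averaging identity $\dim_{\CC}\Hom_{\SL_n^*(E)}(\pi,\mathbbm{1})=|X_\pi|$ via Lu's multiplicity one, the Hiraga--Saito decomposition $\Res^{\GL_n(D)}_{\SL_n(D)}\pi\cong\bigoplus(\dim\rho_{\pi_\flat'})\pi_\flat'$ with all constituents $\GL_n(E)$-conjugate, and the interposition of $\GL_n(D)^+$ to convert $|Z_\pi|/|Y_\pi|$ into $\SL_n(D)$-multiplicity data --- are exactly what the paper does in equations \eqref{m_alpha}--\eqref{lastequality}. Two points to tighten: the paper does not in fact use the Prasad--Takloo-Bighash criterion from \cite{sx24} (only \cite{lu23} is needed, since $|X_\pi|$ is treated abstractly without identifying the distinguishing characters), and the precise Clifford-theoretic identity that produces the $\mathcal{C}_\pi^2$, namely $\mathcal{C}_\pi\cdot\langle\pi^+\rangle_{\SL_n(D)}=\dim_{\CC}\rho_{\pi_\flat}$ combined with $|Z_\pi|/|Y_\pi|=\langle\pi^+\rangle_{\SL_n(D)}^2\cdot\big|\Pi_{\pi^+}(\SL_n(D))\big|$, should be stated and proved explicitly rather than left as ``just a matter of combining.''
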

Section \ref{mainresult} is dedicated to generalizing the machinery for the split case in \cite{ap18} into our non-split inner from setting. 
The proof can be summarized as follows. 
We start with the definition of  $X_\pi$ whose cardinality turns out to equal to the dimension of the $\CC$-vector space ${\Hom}_{\SL_n^*(E)}(\pi,\mathbbm{1}).$ 
Employing the intermediate group ${\GL}_n(D)^+:={\SL}_n(D) \cdot {\GL}_n(E)$ and its irreducible ${\GL}_n(E)$-distinguished representation $\pi^+$ in ${\Res}^{{\GL}_n(D)}_{{\GL}_n(D)^+} \pi,$ we show that $\pi^+$ is the only such representation with multiplicity one (Lemma \ref{lemma1}). 
We then analyze 
\[
\{\chi \in \widehat{({\GL}_n(D)^+/{\SL}_n(D))} : \pi^+ \otimes \chi = \pi^+ \}
\]
whose cardinality comes out to be 
\[
|Z_\pi|/|Y_\pi|
 = \left(\frac{\dim_{\CC} \rho_{\pi_\flat}}{\mathcal{C}_{\pi}}\right)^2 \cdot \big|\{\tau_\flat \in \Irr({\SL}_n(D)): \tau_\flat \subset \pi^+|_{\SL_n(D)}\}/\sim\big|.
\]
On the other hand, we have the following equality, see Equation \eqref{imp equation},
\begin{align*}
\dim_{\CC}{\Hom}_{\SL_n^*(E)}(\pi,\mathbbm{1}) = &
\dim_{\CC} \rho_{\pi_\flat} ~\cdot~ \big|\{\tau_\flat \in \Irr({\SL}_n(D)): \tau_\flat \subset \pi^+|_{\SL_n(D)}\}/\sim \big| \\
& ~ \cdot~ \dim {\Hom}_{\SL_n^*(E)}(\pi_\flat,\mathbbm{1}).
\end{align*}
Hence, the common factor, $|\{\tau_\flat \in \Irr({\SL}_n(D)): \tau_\flat \subset \pi^+|_{\SL_n(D)}\}/\sim|,$ yields the theorem. \\

In particular, we note that $\mathcal{C}_{\pi} \in \{1, 2\},$ where $\mathcal{C}_{\pi}=1$ unless ${\Res}^{{\GL}_n(D)}_{{\GL}_n(D)^+} \pi \cong \pi^{+} \oplus (\pi^{+} \otimes \eta)$ for some $\eta \in \widehat{\GL_{n}(D)^{+}}$ and $\pi^{+} \in \Irr(\GL_{n}(D)^{+})$ with $\pi^{+} \not\s \pi^{+} \otimes \eta$.
For $n=2$, 
we further provide a necessary condition for the representation $\pi_{\flat} \in \Irr(\SL_{2}(D)$ which is $\SL_2^*(E)$-distinguished. 
This condition is stated in terms of a certain degenerate Whittaker functional with respect to a non-trivial character of $D$, see  Proposition \ref{degenerate}. 
This is similar to a result of  Anadavardhanan-Prasad \cite[Lemma~4.1]{ap18}.
We expect an analogous result to hold for the representations of $\SL_n(D)$ which we are able to achieve only for $\SL_2(D)$.

We apply our main result Theorem \ref{mainthm-intro} to irreducible principle series representations $\pi$ of $\GL_2(D)$. We provide explicit dimensions of ${\Hom}_{\SL_2^*(E)}(\pi_\flat, \mathbbm{1})$ for irreducible representations $\pi_{\flat}$ appearing in ${\Res}_{\SL_2(D)}^{\GL_2(D)} (\pi)$, see Proposition \ref{summarypro}. 
These formulas follows from some computations in Mackey theory and $L$-packets of $\SL_2(D).$ We refer the reader to Section \ref{example} for full details.

In Section \ref{splitcouterpart}, our machinery developed in Section \ref{mainresult} is applied to split cases, where $G={\GL}_{2n}(F), H={\GL}_n(E), G_\flat={\SL}_{2n}(F).$ We then have $H_{\flat} = \SL_{n}^{*}(E)$ which remains the same as in the case of the non-split inner forms. 
However, the fact that the restriction of an irreducible admissible representation of $\GL_{2n}(F)$ to the subgroup $\SL_{2n}(F)$ is multiplicity free (\cite[Theorem 1.2]{tad92}), basically affects the proof so as to have $\mathcal{C}_\pi =1$ and $\dim_{\CC} \rho_{\pi_{\flat}} =1,$ and hence we produce the split counterpart Theorem \ref{mainthem-split}, of our main result in Theorem \ref{mainthm-intro}. 

We finally remark that our study in this present paper will continue towards our forthcoming other non-quasi-split cases and expect such a formula. 
For  $G=\GL_n(D)$ and $H=\GL_n(E)$, the study of distinguished representations of $\SL_{n}(D)$  requires at least some multiplicity one results of $\Hom_{H}(\pi,\mu)$ for an irreducible admissible representation $\pi$ of $G$ and a character $\mu$ of $H$, which is available for a quaternion algebra $D$ over $F$ and expected to have higher multiplicity when $[E:F] > 2$. 
As a result, we restrict ourselves to consider only quaternion algebra $D$ over $F$.
Further, it would be interesting to investigate some relations of this distinction under the Jacquet-Langlands correspondence or under a general endoscopic transfer
and to establish some connections with counting Langlands parameters.

\section*{Acknowledgements}
The authors thank Dipendra Prasad for valuable discussions on this subject.
This work was initiated and finalized during our winter visits in 2023 and 2024 at Institut des Hautes \'{E}tudes Scientifiques and we thank for their inspiring research atmosphere. 
KC is supported by a gift from the Simons Foundation (\#840755).
SPP acknowledges the support of the SERB MATRICS grant (MTR/2022/000782).
\section{Preliminaries} \label{prel}
\subsection{Notation} \label{notations}
Let  $F$ be a finite extension of $\QQ_p.$ Denote by $\bar{F}$ an algebraic closure of $F.$
We denote by $\Gamma$ the absolute Galois group $\Gal(\bar{F} / F)$ and by $W_F$ the Weil group of $F.$ 
Given a connected reductive algebraic group $\bG$ defined over $F,$ the group $\bG(F)$ of $F$-rational points shall be written as $G$ and this is applied to other algebraic groups. 
We denote by $\Irr(G)$ the set of isomorphism classes of irreducible admissible complex representations of $G.$

For $i \in \NN$ and $\Gal(\bar{F} / F)$-module $X,$ we denote by $H^i(F, X) := H^i(\Gal(\bar{F} / F), X)$ the Galois cohomology of $X.$ 
Given a topological group $Y,$ we denote by $Z(Y)$ the center of $Y,$  
by $[Y, Y]$ the derived group of $Y,$ and by $\widehat{Y}$ the group $\Hom_{cont}(Y, \CC^\times)$ of continuous quasi-characters.

\subsection{Restriction} \label{restrictionsection}
This section is devoted to some known results in the restriction of representations. We mainly refer to \cite{gk82, tad92, hs11, choiymulti, choiyzem}.

Recall the notation from Section \ref{notations}. Given $G,$ we let $G_\flat$ be a subgroup of $G$ such that
\begin{equation}  \label{cond on G}
[G_\flat,  G_\flat]=[G,  G] \subseteq G_\flat \subseteq G.
\end{equation}  
For $\sigma_\flat \in \Irr(G_\flat), \sigma \in \Irr(G),$ we recall that
\textit{the multiplicity}, denoted by $ \langle \sigma_\flat, \sigma \rangle_{G_\flat},$ of $\sigma_\flat$ in the restriction ${\Res}_{G_\flat}^{G} (\sigma)$ of $\sigma$ to $G_{\flat}$ is defined by
\[
\langle \sigma_\flat, \sigma \rangle_{G_\flat}:= \dim_{\CC}
\; {\Hom}_{G_\flat}(\sigma_\flat, {\Res}_{G_\flat}^{G} (\sigma)).
\]
Given $\sigma_\flat \in \Irr(G_\flat),$ we have $\sigma \in \Irr(G)$ such that 
\[
\sigma_\flat \subset {\Res}_{G_\flat}^{G}(\sigma).
\]
We shall use both $\Pi_{\sigma_\flat}(G_\flat)$ and $\Pi_{\sigma}(G_\flat)$ for the set of equivalence classes of all irreducible constituents of ${\Res}_{G_\flat}^{G}(\sigma).$ 
Due to \cite[Lemma 2.1]{gk82} and \cite[Proposition 2.4 \& Corollary 2.5]{tad92}, we note that $\Pi_{\sigma_\flat}(G_\flat)$ is finite and independent of the choice  $\sigma \in \Irr(G).$ 
For any irreducible constituents $\si_{\flat 1}$ and  $\si_{\flat 2}$ in ${\Res}_{G_\flat}^{G}(\sigma),$ we have $\Pi_{\si_{\flat 1}}(G_\flat) = \Pi_{\si_{\flat 1}}(G_\flat).$ 
From \cite[Lemma 2.1]{gk82}, \cite[Lemma 2.1]{tad92}, and \cite[Chapter 2]{hs11} we have the following decomposition:
\begin{equation} \label{decomp of Res}
{\Res}_{G_\flat}^{G}(\sigma) = \bigoplus _{\tau_\flat \in \Pi_{\sigma}(G_\flat)} \langle \tau_\flat, \sigma \rangle_{G_\flat} ~ \tau_\flat.
\end{equation}
Here 
$\langle \tau_{\flat}, \sigma \rangle_{G_\flat}$
independent of the choice of $\tau_{\flat}$, let us denote this by $\langle \sigma \rangle_{G_\flat}$.
We may rewrite \eqref{decomp of Res} 
\begin{equation} \label{decomp of Res 1}
{\Res}_{G_\flat}^{G}(\sigma) = \langle \sigma \rangle_{G_\flat} \left( \bigoplus _{\tau_\flat \in \Pi_{\sigma}(G_\flat)}  \tau_\flat \right),
\end{equation}

Let $X(\sigma)= \{ \chi \in \widehat{(G/G_\flat)} : \sigma \s \sigma  \otimes \chi \}$. By \cite[Proposition 3.1]{choiymulti}), the cardinality of $X(\sigma)$ has the following property 
\begin{equation} \label{cardinality}
|X(\sigma)| 
= 
{\dim}_{\CC} \; {\Hom}_{G_\flat}({\Res}_{G_\flat}^{G} (\sigma), {\Res}_{G_\flat}^{G} (\sigma))
= 
|\Pi_{\sigma_\flat}(G_\flat)| \cdot \langle \sigma_\flat, \sigma\rangle_{G_\flat}^2.
\end{equation}

Let us review the work of Hiraga-Saito \cite{hs11} in order to describe the restriction from the inner form of ${\GL}_n$ to the inner form of ${\SL}_n$. 
Any $F$-inner forms of ${\GL}_n$ and ${\SL}_n$  are of the form ${\GL}_m(D)$ and ${\SL}_m(D)$ respectively, where $n=md$ and $D$ is a central division algebra of dimension $d^2$ over $F$ (see \cite[Sections 2.2 \& 2.3]{pr94}).
Note that $d=1$ is allowed, in which case $D=F$.
By abuse of notation, we write ${\GL}_m(D)$ and ${\SL}_m(D)$ for the algebraic groups over $F$ and their $F$-points.
Let ${^L}{\GL}_m(D)^\circ$ and ${^L}{\SL}_m(D)^\circ$ denote the connected component of the $L$-group of $\GL_n(D)$ and the $L$-group of $\SL_n(D)$, respectively. 
Since $\Gal(\bar F/F)$ acts trivially on these connected components, we have
\[ 
{^L}{\GL}_m(D)^\circ = {\GL}_n(\CC) ~~~~\text{and}~~~~ {^L}{\SL}_m(D)^\circ  = {\PGL}_n(\CC). 
\]
Due to \cite{abps13, gk82, ht01, he00, hs11, scholze13}, the local Langlands conjecture and the conjectural internal structure of $L$-packets in \cite[Section 4.1]{choiymulti} for all inner forms of ${\SL}_n$ and ${\GL}_n$ are known.

Following \cite[Chapter 12]{hs11}, \cite[Section 3]{abps13} and  \cite[Chapter 9]{art12},
given an $L$-parameter $\varphi_\flat$ of $\SL_m(D),$ 
we denote the centralizer of the image of $\vp$ in $\PGL_n(\CC)$ by $S_{\vp}.$ 
Write $S_{\vp, \scn}$ for the full pre-image of $S_{\vp}$ in the simply connected cover $\SL_n(\CC)$ of 
$\PGL_n(\CC).$
We then have an exact sequence
\begin{equation*} \label{exact isogeny}
1 \longrightarrow \mathbf{\mu_n}(\CC) \longrightarrow S_{\vp, \scn} \longrightarrow S_{\vp} \longrightarrow 1.
\end{equation*}
We set 
\begin{align*}
\cS_{\vp} &:= \pi_0(S_{\vp}) \\
\cS_{\vp, \scn} & := \pi_0(S_{\vp, \scn}) \\
\mathcal{Z}_{\vp, \scn} & := \mathbf{\mu_n}(\CC) / (\mathbf{\mu_n}(\CC)  \cap S_{\vp, \scn}^{\circ}),
\end{align*}
where $\mathbf{\mu_n}(\CC):=\{z \in \CC^\times: z^n=1\},$
and have the exact sequence of connected compoments
\begin{equation*} \label{exact seq for S-gps}
1 \longrightarrow \mathcal{Z}_{\vp, \scn}  \longrightarrow \cS_{\vp, \scn} \longrightarrow \cS_{\vp} \longrightarrow 1.
\end{equation*}
Fix a character $\zeta_{{^L}{\SL}_m(D)^\circ}$ of $\mathbf{\mu_n}(\CC),$ 
which corresponds to the inner form $\SL_m(D)$ of $\SL_n$ via the Kottwitz isomorphism \cite[Theorem 1.2]{kot86}. 
We denote by 
\[
\Irr(\cS_{\vp, \scn}, \zeta_{{^L}{\SL}_m(D)^\circ})
\]
the set of irreducible representations of $\cS_{\vp, \scn}$ 
with central character $\zeta_{{^L}{\SL}_m(D)^\circ}$ on $\mathbf{\mu_n}(\CC).$

Let  $\vp$ be an $L$-parameter of ${\GL}_m(D)$ such that ${pr} \circ \vp=\vp_\flat,$ where $pr: {\GL}_n(\CC) \rightarrow {\PGL}_n(\CC)$ is the the natural projection (also see \cite{weil74, he80}). We have an $L$-packet of ${\GL}_m(D)$ attached to $\vp$ is $\{ \sigma  \}.$  
We then have a bijection
\begin{equation} \label{bij sl}
\Pi_{\vp_{\flat}}\big({\SL}_m(D)\big) \overset{1-1}{\longleftrightarrow} \Irr\big(\cS_{\vp_{\flat}, \scn}({^L}{\SL}_m(D)^\circ, ~ \zeta_{{\SL}_m(D)} \big),
\end{equation}
such that the isomorphism
\begin{equation*} 
\sigma ~ ~ \s 
\bigoplus_{\rho_\flat \in \Irr\big(\cS_{\vp_{\flat}, \scn}({^L}{\SL}_m(D)^\circ), ~\zeta_{{\SL}_m(D)} \big)} \rho_\flat \boxtimes  \si_{\rho_\flat}
\end{equation*}
as representations of $\cS_{\vp_{\flat}, \scn}({^L}{\SL}_m(D)^\circ) \times {\SL}_m(D)$ holds,
where $\si_{\rho_\flat}$ denotes the image of $\rho_\flat$ via the bijection \eqref{bij sl} (see \cite[Lemma 12.6]{hs11}).
It then follows from \cite[p.5]{hs11} that 
\begin{equation} \label{dim=dim=multi}
\langle \sigma_\flat, \sigma \rangle_{{\SL}_m(D)} = \dim \xi_{\si_\flat} = \dim \rho_{\si_\flat}.
\end{equation}
Further, for any $\si_{\flat 1}, \si_{\flat 2} \in \Pi_{\vp_{\flat}}({\SL}_m(D)),$ we have
$
{\dim}\rho_{\si_{\flat 1}} = {\dim}\rho_{\si_{\flat 2}}
$ 
(see also \cite[Remark 3.3]{choiymulti}).
 
We note that \cite[Theorem 4.22 and Remark 4.23]{choiymulti} generalizes the Hiraga and Saito's work in \eqref{dim=dim=multi} to the case of arbitrary connected reductive algebraic groups $\bG$ and $\bG_\flat$ with the condition \eqref{cond on G}.

\section{General set-up and some known cases} \label{section set-up}
\subsection{General setting} 
Recall $G_{\flat} \subset G$ from Section \ref{restrictionsection}. Let $H$ be a subgroup of $G$ and set 
\[
H_{\flat} = H \cap G_{\flat}.
\]
We now consider the following diagram of inclusions
\begin{align} \label{general diagram}
\xymatrix{
G_\flat \ar@{-}[d]_{\cup} \ar@{-}[r]^{\subset} &G\ar@{-}[d]^{\cup}\\
H_\flat \ar@{-}[r]^{\subset}          &H
}
\end{align} 
We will consider the pairs $(G,H)$ for which the dimension of the $\CC$-vector space ${\Hom}_{H}( \pi, \chi)$ is less than or equal to one for all $\pi \in \Irr(G)$ and all characters $\chi$ of $H$.
Then for the pair $(G_{\flat}, H_{\flat})$, we wish to understand  $H_{\flat}$-distinguished representations of $G_{\flat}$.
In our setting, to obtain a multiplicity formula an intermediate subgroup $G^{+} := H \cdot G_\flat \cdot Z(G)$ plays a crucial role. 
Then, we also have the following diagram of inclusions:
\begin{center}
\begin{tikzcd}[column sep={3em}] 
G_\flat
  \arrow[dash]{rr}{\subset} 
  \arrow[dash]{dd}[swap]{\cup} 
  \arrow[dash]{rd}[inner sep=1pt]{\subset} 
& & 
G 
 \arrow[shorten >= 10pt, dash]{dd}[inner sep=1pt]{\cup}
\\
& 
 G^+ 
        \arrow[dash]{ur}[swap,inner sep=1pt]{\subset} 
& &

\\
H_\flat
  \arrow[swap,shorten >= 20pt, dash]{rr}{\subset} 
          \arrow[dash]{ur}[swap,inner sep=1pt]{\subset}
& & 
H
  \arrow[dash]{lu}[swap,inner sep=1pt]{\supset} 
\\
\end{tikzcd}
\end{center} 

The present paper works over the set-up in the diagram \eqref{general diagram}. 
It is also of interest to provide a general set-up in the context of algebraic groups as follows.
Given connected reductive algebraic groups $\bG, \bG_\flat, \bH$ defined over $F$ (possibly, any field $F$) such that $\bG_\flat, \bH < \bG,$ and $[\bG_\flat, \bG_\flat]=[\bG,  \bG],$
one might set $\bH_\flat=\bH \cap \bG_\flat$ and consider groups of $F$-rational points, $G=\bG(F), G_\flat=\bG_\flat(F), H=\bH(F), H_\flat=\bH(F).$ 
The discrepancy  between $\bH_\flat(F)$ and $H \cap G_{\flat}$ and the subtlety between $[\bG_\flat, \bG_\flat]=[\bG,  \bG]$ and $[G_\flat, G_\flat]=[G,  G]$ are noted. 
As one observation, we have the following exact sequence (c.f., \cite[Section 8.1]{springer98} and \cite[Section 5.1.2]{kalrigd15-2}), 
\begin{equation} \label{exact sequence 1}
1 \longrightarrow Z(\bG) \cap \bG_\flat = Z(\bG_\flat) \longrightarrow Z(\bG) \times \bG_\flat \overset{\lambda}{\longrightarrow} \bG \rightarrow 1,
\end{equation}
where the map $\lambda: Z(\bG) \times \bG_\flat \rightarrow \bG$ is given as $\lambda((a,b)) =ab.$
Applying Galois cohomology, we get the following long exact sequence
\begin{equation} \label{exact sequence 2}
1 \rightarrow Z(G) \cap G_\flat \rightarrow Z(G) \times G_\flat \rightarrow G \rightarrow H^1(F, Z(\bG) \cap \bG_\flat) \rightarrow H^1(F, Z(\bG)) \times H^1(F, \bG_\flat) \rightarrow \cdots 
\end{equation}

\begin{rem} \label{keyrem}
\begin{enumerate}
\item When $Z(\bG)$ is a split torus, $H^1(F, Z(\bG))=1.$
\item If $\bG_\flat$ is simply connected, then $H^1(F, \bG_\flat) = 1.$
\item Exact sequences \eqref{exact sequence 1} and \eqref{exact sequence 2} hold for $\bH$ and $\bH_\flat$ in place of $\bG$ and $\bG_\flat,$ respectively.

\item When $Z(\bG)$ is a split torus and when $\bG_\flat$ is simply connected, then  \eqref{exact sequence 2} yields
\[
1 \rightarrow Z(G) \cap G_\flat \rightarrow Z(G) \times G_\flat \rightarrow G \rightarrow H^1(F, Z(\bG) \cap \bG_\flat) \rightarrow 1
\]
and $H^1(F, Z(\bG) \cap \bG_\flat)$ is a finite abelian group.
\end{enumerate}
\end{rem}
\begin{rem}
It appears that in our general setting, one may not have to restrict oneself for the pairs $(G,H)$ for which one has  $\dim_{\CC} {\Hom}_{H}(\pi, \chi) \leq 1$ for $\pi \in \Irr(G)$ and characters $\chi$ of $H$.
If we assume that the pair $(G,H)$ is such that   $\dim_{\CC} {\Hom}_{H}(\pi, \chi)$ is finite for all $\pi \in \Irr(G)$ and characters $\chi$ of $H$, then one can describe $\dim_{\CC} {\Hom}_{H_{\flat}} (\pi_{\flat}, \mathbbm{1})$ in terms of $\dim_{\CC} {\Hom}_{H}(\pi, \mathbbm{1})$ for any $\pi_\flat \in \Irr(G_\flat)$ such that $\pi_\flat \in {\Res}^G_{G_\flat} (\pi).$
\end{rem}

\subsection{Some previous works}

\subsubsection{}
Now we recall the work of Anandavardhanan-Prasad \cite{ap03}.
Let $E/F$ be a quadratic field extension.
We consider $F$-algebraic groups $\bG={\Res}_{E/F} \GL_n, \bH=\GL_n, \bG_\flat={\Res}_{E/F} \SL_n.$
We then have $G=\bG(F)={\GL}_n(E), H=\bH(F)={\GL}_n(F), G_\flat=\bG_\flat(F)={\SL}_n(E)$ then $ H_\flat = \SL_{n}(E) \cap \GL_{n}(F) = {\SL}_n(F).$ Then the diagram \eqref{general diagram} for $n=2$ is the following
\begin{equation} \label{anabnd-prasad}
\xymatrix{
    {\SL}_{2}(E) \ar@{-}[d]_{\cup} \ar@{-}[r]^{\subset} & {\GL}_{2}(E) \ar@{-}[d]^{\cup} \\
    {\SL}_{2}(F) \ar@{-}[r]^{\subset} & {\GL}_{2}(F)
}
\end{equation}
They study $\SL_2(F)$-distinguished representations of $\SL_2(E)$ and give a multiplicity formula \cite[Theorem~1.4]{ap03}. 
Further, they extended their study of $\SL_2(F)$-distinguished representations of ${\SL}_2(E)$ to ${\SL}_n(F)$-distinguished representaations of $\SL_n(E)$ \cite{ap18} with the diagram \eqref{general diagram} as follows
\begin{equation} 
\xymatrix{
    {\SL}_{n}(E) \ar@{-}[d]_{\cup} \ar@{-}[r]^{\subset} & {\GL}_{n}(E) \ar@{-}[d]^{\cup} \\
    {\SL}_{n}(F) \ar@{-}[r]^{\subset} & {\GL}_{n}(F).
}
\end{equation}
Moreover, they also classified the $\SL_{n}(F)$-distinguished representations of $\SL_{n}(E)$.
Let $\pi_\flat$ be an irreducible admissible representation of $\SL_{n}(E).$ 
Then, $\pi_\flat$ is $\SL_{n}(F)$-distinguished if and only if the Langlands parameter of $\pi_\flat$ is in the image of the base change map and $\pi_\flat$ has a Whittaker model with respect to a character $\psi : N(E) \rightarrow \CC^{\times}$ such that $\psi$ is trivial on $N(F)$, where $N$ is the unipotent subgroup consisting of upper triangular matrices with diagonal entries 1, see \cite[Theorem~5.6]{ap18}.
Moreover, they also prove that the multiplicity is also equal to the cardinality of the inverse image of $L$-parameter of $\pi_\flat$ 
under a certain base change map, see \cite[Theorem~5.6]{ap18}.
On the other hand, for $n=2$, Hengfei Lu gave a different proof of these results of Anandavardhanan-Prasad using theta correspondence \cite[Theorem~1.1, Theorem~1.2]{lu18}.

\begin{rem}
The following observations are the main ingredients to obtain a multiplicity formula for distinguished representations for $\SL_2(E)$  \cite[Theorem~1.4]{ap03} and for $\SL_n(E)$ \cite[Propoosition~3.6]{ap18}. 
\begin{enumerate}
\item  For every $\pi_{\flat} \in \Irr({\SL}_n(E))$ such that $\Hom_{{\SL}_n(F)} (\pi_{\flat}, \mathbbm{1}) \neq 0$ there exists $\pi \in \Irr({\GL}_n(E))$ such that $\Hom_{{\GL}_n(F)}(\pi, \mathbbm{1}) \neq 0$ and $\pi_{\flat} \subset \Res^{{\GL}_n(E)}_{{\SL}_n(E)}\pi$.  
\item The $L$-packet of $\pi_{\flat}$ is the set of equivalence classes of irreducible representations appearing in $\Res^{{\GL}_n(E)}_{{\SL}_n(E)}\pi$. The group ${\GL}_n(E)$ acts transitively on the $L$-packet of $\pi_{\flat}$.
\item For any character $\mu : F^{\times} \rightarrow \CC^{\times}$, ${\dim}_{\CC} {\Hom}_{\GL_{n}(F)} (\pi, \mu \circ \det) \leq 1$.
\end{enumerate}
 
\end{rem}

\subsubsection{} \label{conjugation}
Another variation of Anandvardhanan-Prasad case considered by Hengfei Lu in \cite{lu19} which we describe below.
Let $E/F$ be a quadratic field extension. 
Let $D$ denote the quaternion central division algebra over $F$. 
Note that $D_{E} := D \otimes_F E \cong M_{2}(E)$. 
We have a natural embedding $D^{\times} \hookrightarrow D_{E}^{\times} \cong {\GL}_2(E)$. 

We write $\GL_2(D)$ for the non-split inner form of $\GL_2$ as well as its $F$-points by abuse of notation.
We consider $F$-algebraic groups $\bG=\Res_{E/F}\GL_2, \bH=\GL_2(D), \bG_\flat=\Res_{E/F}\SL_2.$
We then have $G=\bG(F)={\GL}_2(E), H=\bH(F)={\GL}_1(D), G_\flat=\bG_\flat(F)={\SL}_2(E)$ then $ H_\flat = \SL_{2}(E) \cap \GL_{1}(D) = {\SL}_1(D).$ Then the diagram \eqref{general diagram} is the following
\begin{equation} \label{anabnd-prasad-variation}
\xymatrix{
    {\SL}_{2}(E) \ar@{-}[d]_{\cup} \ar@{-}[r]^{\subset} & {\GL}_{2}(E) \ar@{-}[d]^{\cup} \\
    {\SL}_{1}(D) \ar@{-}[r]^{\subset} & D^{\times}
}    
\end{equation}
Let $\pi_{\flat}$ be an irreducible representation of $\SL_{2}(E)$ which is $\SL_{1}(D)$-distinguished. 
Then, the dimension ${\dim}_{\CC} {\Hom}_{\SL_{1}(D)} (\pi_{\flat}, \mathbbm{1})$ is explicitly given by \cite[Theorem~1.3]{lu19}. 
It should be noted that an embedding $\SL_1(D) \hookrightarrow \SL_2(E)$ is not unique up to conjugation by elements of $\SL_2(E)$, unlike the embedding of $D^{\times} \hookrightarrow \GL_2(E)$ which is unique up to conjugation by elements of $\GL_2(E)$ by Skolem-Noether theorem. 
Lu fixes an embedding $\SL_1(D) \hookrightarrow \SL_2(E)$ to study the distinguished representations of $\SL_2(E)$ by $\SL_1(D)$. 
His method in obtaining multiplicity formula is different from the work of Anandavardhanan-Prasad and he uses local theta correspondence.

\section{Our case in question} \label{caseunderconsideration}
Let $D$ be a quaternion division algebra over $F$ and $E/F$ is a maximal subfield so that $E/F$ is quadratic. 
We write $\GL_n(D)$ for the non-split inner form of $\GL_n$ as well as its $F$-points by abuse of notation. We apply the same notation to $\SL_n(D).$
We consider $F$-algebraic groups $\bG=\GL_n(D), \bH = \Res_{E/F}\GL_n, \bG_\flat=\SL_n(D).$ 
We consider $G=\bG(F)={\GL}_n(D), H=\bH(F)={\GL}_n(E), G_\flat=\bG_\flat(F)={\SL}_n(D)$ and
\begin{equation} \label{sl*}
H_\flat = G_{\flat} \cap H = {\SL}_n(D) \cap {\GL}_n(E) = \{ g \in {\GL}_n(E) : \mathcal{N}(\det(g))=1 \}.
\end{equation}
Note that $\SL_{n}(E) \subsetneq H_{\flat}$ and we write $H_{\flat} = \SL_{n}^{*}(E)$.
Thus, in this case, the diagram \eqref{general diagram} is the following:
\begin{center} 
\begin{tikzcd}[column sep={3em}] \label{slnD-slnL}
{\SL}_n(D)
  \arrow[dash]{rr}{\subset} 
  \arrow[dash]{dd}[swap]{\cup} 
  \arrow[dash]{rd}[inner sep=1pt]{\subset} 
& & 
{\GL}_n(D) 
 \arrow[shorten >= 10pt, dash]{dd}[inner sep=1pt]{\cup}
\\
& 
{\SL}_n(D) \cdot {\GL}_n(E)  =: {\GL}_n(D)^+ 
        \arrow[dash]{ur}[swap,inner sep=1pt]{\subset} 
& &

\\
{\SL}^*_n(E)
  \arrow[swap,shorten >= 20pt, dash]{rr}{\subset} 
          \arrow[dash]{ur}[swap,inner sep=1pt]{\subset}
& & 
{\GL}_n(E)
  \arrow[dash]{lu}[swap,inner sep=1pt]{\supset} 
\\
\end{tikzcd}
\end{center}
We study $\SL_{n}^{*}(E)$-distinguished representations of $\SL_{n}(D)$ for which we obtain a multiplicity formula.
It should be noted that the embedding $\SL_{n}^{*}(E) \hookrightarrow \SL_{n}(D)$ is not unique up to conjugation by the elements of $\SL_{n}(D)$ (also refer to Subsection \ref{conjugation}). Any two embeddings are conjugate by an element in $\GL_{n}(D)$. The number of different embeddings up to conjugation by elements in $\SL_{n}(D)$ is $| \SL_{n}(D) \backslash \GL_{n}(D)/ \GL_{n}(E)| = |F^{\times} / \mathcal{N}(E^{\times})| =2$.
\begin{rem}
We make the following remarks regarding our case in question:
\begin{enumerate}
\item 
We note that from ${\SL}_n(D) \cdot {\GL}_n(E) Z({\GL}_n(D)) =: {\GL}_n(D)^+$ we can skip writing $Z({\GL}_n(D))$ as it is $F^{\times}$ which is already contained in ${\GL}_n(E)$.

\item ${\GL}_n(D)^+ / \SL_n(D) = {\GL}_n(E)/({\GL}_n(E) \cap \SL_n(D)) ={\GL}_n(E)/{\SL}^*_n(E) \s E^{\times}/E^{1} \s \mathcal{N}(E^\times).$

\end{enumerate}
\end{rem}

\section{Main results} \label{mainresult}

This section is devoted to studying the $\SL_{n}^{*}(E)$-distinguished representations of $\SL_{n}(D)$ as in the case described in Section \ref{caseunderconsideration}. 
We first recall the following recent result due to Hengfei Lu \cite[Theorem~ 1.1, Theorem~5.1]{lu23} which will be used on several occasions.
\begin{thm} \label{mult one of Lu}
Let $\pi$ be an irreducible admissible representation either of $\GL_{n}(D)$ or of $\GL_{2n}(F)$. 
Then, for any character $\mu : E^{\times} \rightarrow \CC^{\times}$, we have 
\[
{\dim}_{\CC} {\Hom}_{{\GL}_n(E)} (\pi, \mu \circ \det) \leq 1. 
\]
\end{thm}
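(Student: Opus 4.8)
The statement asserts a local multiplicity-one result for $\GL_n(E)$-periods (twisted by a character) on representations of $\GL_n(D)$ or $\GL_{2n}(F)$. This is precisely the content of Lu's papers \cite{lu23}, so strictly speaking the proof is a citation; but if one wants to reconstruct the argument, the natural approach is the Gelfand--Kazhdan / relative-trace-formula-free method via a careful analysis of double cosets and distributions. I would proceed as follows.

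First I would set up the Gelfand pair framework: write $G = \GL_n(D)$ (the case $\GL_{2n}(F)$ being handled identically, since $M_{2n}(F)$ and $M_n(D)$ are both forms of the same ambient matrix algebra and the same combinatorics of orbits applies), $H = \GL_n(E) \hookrightarrow G$, and consider the space of $(H,\mu)$-equivariant distributions on $G$. By a standard Bernstein-type localization principle (Gelfand--Kazhdan, as refined by Aizenbud--Gourevitch--Rallis--Schiffmann), it suffices to exhibit an anti-involution $\tau$ of $G$ that preserves $H$, acts trivially on the relevant characters, and stabilizes (the closure of) every $H \times H$ double coset in $G$; one then deduces that every $H$-bi-invariant distribution is $\tau$-invariant, which forces $\dim \Hom_H(\pi,\mu\circ\det) \cdot \dim \Hom_H(\tilde\pi, \mu^{-1}\circ\det) \le 1$, and combined with the standard duality $\Hom_H(\pi,\mu\circ\det)^\vee \cong \Hom_H(\tilde\pi,\mu^{-1}\circ\det)$ this yields the bound $\le 1$. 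The involution to use is the transpose-inverse-type map $g \mapsto \mathcal{N}$-adjusted transpose, conjugated so as to fix $\GL_n(E)$ pointwise up to the $H$-action; this is modeled on the classical $(\GL_n,\GL_{n-1})$ and $(\GL_n(E),\GL_n(F))$ arguments.

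The key combinatorial input — and the step I expect to be the main obstacle — is the orbit analysis: one must show that $H \times H$ acting on $G$ by left and right translation has the property that every orbit is stable under the chosen anti-involution, or more precisely that a suitable transpose map sends each double coset into itself. Over $D$ a quaternion algebra this requires understanding $\GL_n(E)$-double cosets in $\GL_n(D)$, which via the identification $D \otimes_F E \cong M_2(E)$ and $\GL_n(D) \otimes E \cong \GL_{2n}(E)$ becomes a Galois-twisted version of the symmetric-space orbit decomposition for $\GL_{2n}/\GL_n \times \GL_n$; the extra subtlety compared to the split case is the nontriviality of $H^1$ and the fact that the embedding $\SL_n^*(E) \hookrightarrow \SL_n(D)$ is not unique up to conjugacy, though at the $\GL$-level the Skolem--Noether uniqueness rescues us. One also has to control the behavior on the "boundary" strata (non-closed orbits), where nilpotent distributions supported on smaller orbits could a priori violate the bound; here one invokes the standard argument that such distributions, if they existed, would produce extra multiplicity in a parabolically induced or Jacquet-module setting, and one rules this out by induction on $n$ using the mirabolic/filtration techniques of Bernstein--Zelevinsky adapted to $D$.

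Finally, granting the geometric multiplicity-one statement for distributions, I would deduce the representation-theoretic bound: by the Gelfand--Kazhdan criterion the Hecke algebra $\mathcal{H}(G /\!\!/ H)$ (or the appropriate $\mu$-twisted version) is commutative, hence every irreducible $\pi$ with $\Hom_H(\pi,\mu\circ\det)\ne 0$ contributes a one-dimensional space. Since by hypothesis of the surrounding paper we only ever need the inequality $\le 1$ (not an exact formula) at the $\GL$-level, this suffices, and the result feeds into Proposition \ref{key pro} and Theorem \ref{mainthm-intro} as the indispensable "multiplicity one upstairs" ingredient that makes the descent to $\SL_n(D)$ tractable. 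If one prefers to avoid rebuilding the distributional machinery, the honest proof is simply to cite \cite[Theorem~1.1, Theorem~5.1]{lu23}, where this is established (for $n=1$ using theta correspondence and in general by reduction), and that is what the paper does.
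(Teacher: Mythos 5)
You correctly observe that the paper gives no proof of this statement at all --- it is stated as a recalled result with the citation \cite[Theorem~1.1, Theorem~5.1]{lu23}, and you explicitly say that this citation ``is what the paper does,'' which matches the paper exactly. Your supplementary Gelfand--Kazhdan sketch is a plausible outline of how such a multiplicity-one theorem is typically proved, but since the paper itself does not reprove Lu's theorem, the sketch is extra-curricular and its details (e.g.\ the attribution of theta-correspondence for $n=1$ and reduction in general) need not agree with Lu's actual argument to count as a correct identification of the paper's approach.
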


As an extension of \cite[Lemma 3.2]{ap18}, we have
\begin{pro} \label{key pro}
Let $\pi_\flat \in \Irr(\SL_n(D))$ be $\SL_n^*(E)$-distinguished. 
Then there exists a $\pi \in \Irr({\GL}_n(D))$ which is  ${\GL}_n(E)$-distinguished  and
\[
\pi_\flat \subset {\Res}^{{\GL}_n(D)}_{\SL_n(D)} ~ \pi.
\]
\end{pro}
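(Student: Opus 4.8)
The plan is to imitate the argument of \cite[Lemma 3.2]{ap18}, replacing $\SL_n(E) \subset \GL_n(E)$ with $\SL_n(D) \subset \GL_n(D)$ and $\SL_n(F) \subset \GL_n(F)$ with $\SL_n^*(E) \subset \GL_n(E)$. Start with a nonzero functional $\ell \in {\Hom}_{\SL_n^*(E)}(\pi_\flat, \mathbbm{1})$, pick any $\pi \in \Irr(\GL_n(D))$ with $\pi_\flat \subset {\Res}^{\GL_n(D)}_{\SL_n(D)}(\pi)$ (such $\pi$ exists by \cite[Lemma 2.1]{gk82}, \cite[Proposition 2.4]{tad92}), and try to extend $\ell$ to a functional on $\pi$ that transforms by a character of $\GL_n(E)$. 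The natural candidate is to first produce a functional on the intermediate group: since $\pi_\flat$ occurs in ${\Res}^{\GL_n(D)}_{\SL_n(D)}(\pi)$ with some multiplicity, one projects to the $\pi_\flat$-isotypic component and averages $\ell$ against the (compact modulo center, or rather finite modulo $\SL_n(D)$) quotient group ${\GL}_n(D)^+/\SL_n(D) \s \mathcal{N}(E^\times)$ to get an $\SL_n^*(E) \cdot \SL_n(D) = {\GL}_n(D)^+ \cap (\text{stuff})$-equivariant functional; more precisely one wants a functional on an irreducible summand $\pi^+$ of ${\Res}^{\GL_n(D)}_{{\GL}_n(D)^+}\pi$ which is $({\SL}_n^*(E),\mathbbm{1})$-equivariant, hence (since ${\GL}_n(D)^+ = \SL_n(D)\cdot\GL_n(E)$ and ${\SL}_n^*(E) = \GL_n(E)\cap\SL_n(D)$) a functional that is $\GL_n(E)$-equivariant up to a character of ${\GL}_n(E)/{\SL}_n^*(E) \s \mathcal{N}(E^\times)$.

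The key steps, in order, would be: (i) from $\ell \neq 0$ on $\pi_\flat$, exhibit a nonzero $\SL_n^*(E)$-invariant functional on the ${\GL}_n(D)^+$-representation generated by the $\pi_\flat$-isotypic piece — this is essentially Frobenius reciprocity / the fact that $\SL_n^*(E)\cdot\SL_n(D)$ sits inside ${\GL}_n(D)^+$ with the relevant quotient abelian; (ii) decompose ${\Res}^{\GL_n(D)}_{{\GL}_n(D)^+}\pi$ into irreducibles $\pi^+_i$ and observe the functional is supported on at least one summand $\pi^+$, giving a nonzero element of ${\Hom}_{\SL_n^*(E)}(\pi^+,\mathbbm{1})$, equivalently ${\Hom}_{\GL_n(E)}(\pi^+, \mu\circ\det)$ is nonzero for the trivial $\mu$ after suitably interpreting the quotient character — here one uses that ${\GL}_n(E)/{\SL}_n^*(E)$ injects into ${\GL}_n(D)^+/\SL_n(D)$; (iii) induce back up: the functional on $\pi^+$ extends to a $({\GL}_n(E),\chi)$-equivariant functional on $\pi = \mathrm{Ind}$-type object over ${\GL}_n(D)^+$, for some character $\chi$ of $\GL_n(E)$ trivial on $\SL_n^*(E)$, i.e. $\chi = \mu\circ\det$ for a character $\mu$ of $E^\times$; (iv) finally twist: replace $\pi$ by $\pi\otimes(\mu^{-1}\circ\mathcal{N}\circ\det)$ — note $\mu^{-1}\circ\mathcal{N}\circ\det$ is trivial on $\SL_n(D)$ so this does not disturb the containment $\pi_\flat \subset {\Res}^{\GL_n(D)}_{\SL_n(D)}\pi$ — to arrange that the resulting $\pi$ is genuinely ${\GL}_n(E)$-distinguished (with trivial character).

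The main obstacle is step (iii)/(iv): one must be careful that the character $\chi$ of $\GL_n(E)$ one obtains really is of the form $\mu\circ\det$ with $\mu$ a character of $E^\times$ (so that it extends to a character $\mu\circ\mathcal{N}\circ\det$ of all of $\GL_n(D)$, which is where quaternionic-ness matters), and that the twist used to kill $\chi$ is trivial on $\SL_n(D)$ — this is automatic since any character of $\GL_n(D)$ factors through $\mathcal{N}\circ\det: \GL_n(D) \to F^\times$ and hence is trivial on $\SL_n(D)$, but it must be checked that $\chi$ descends appropriately from $F^\times$ rather than merely $E^\times$. Concretely, $\chi$ restricted to $F^\times \subset \GL_n(E)$ (scalars) could a priori be nontrivial; one resolves this by observing that the center $F^\times$ of $\GL_n(D)$ acts on $\pi$ by a character, which constrains $\chi$ on scalars, and then using Theorem \ref{mult one of Lu} to guarantee the resulting $\pi$ is not just ${\GL}_n(E)$-distinguished but uniquely so, so the choice is rigid. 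The rest — extending and averaging functionals across abelian quotients — is routine Mackey-theoretic bookkeeping once the structure of ${\GL}_n(D)^+$ and its quotients established in Section \ref{caseunderconsideration} is in hand.
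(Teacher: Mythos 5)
Your plan is in essence the paper's proof: show that $\pi$ is $(\GL_n(E),\alpha)$-distinguished for some character $\alpha$ of $\GL_n(E)$ trivial on $\SL_n^*(E)$, and then twist $\pi$ by a character of $\GL_n(D)$ trivial on $\SL_n(D)$ whose restriction to $\GL_n(E)$ is $\alpha^{-1}$. That said, the route through $\GL_n(D)^+$ is an unnecessary detour for this proposition. The paper works directly with $\Hom_{\SL_n^*(E)}(\pi,\mathbbm{1})$, regarded as a smooth module for the abelian quotient $\GL_n(E)/\SL_n^*(E) \cong E^\times/E^1$, decomposes it into characters (each appearing with multiplicity at most one by Theorem \ref{mult one of Lu}), and observes that $\Hom_{\SL_n^*(E)}(\pi_\flat,\mathbbm{1})\neq 0$ forces some $m_\alpha\neq 0$; no intermediate group, no Mackey bookkeeping. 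Two small corrections to your ``main obstacle'' discussion are also worth noting. First, the twisting character should be $\tilde\alpha\circ\mathcal{N}$, where $\tilde\alpha\in\widehat{F^\times}$ extends $\alpha^{-1}$ via $E^\times/E^1\cong N_{E/F}(E^\times)\subset F^\times$ and $\mathcal{N}:\GL_n(D)\to F^\times$ is the reduced norm; the expression ``$\mu^{-1}\circ\mathcal{N}\circ\det$'' does not parse for $\GL_n(D)$. Second, there is no separate issue about $\chi$ on scalars: the only thing to check is precisely that $\GL_n(E)/\SL_n^*(E)\cong N_{E/F}(E^\times)$ embeds in $\GL_n(D)/\SL_n(D)\cong F^\times$ with finite index, so that any character of the former extends, and you do correctly identify that as the place where the quaternionic structure is used.
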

\begin{proof}
Let $\pi \in \Irr({\GL}_n(D))$ be given such that 
\[
\pi_\flat \subset {\Res}^{{\GL}_n(D)}_{\SL_n(D)} (\pi).
\]
For any $\sigma \in \Irr({\GL}_n(E)),$ the vector space of linear functionals
\[
{\Hom}(\pi, \sigma)
\]
is equipped with a natural ${\GL}_n(E)$-action given by
\[
(g\cdot \lambda)(v)= \sigma(g)(\lambda (\pi(g^{-1}) \cdot v)~~~ \text{ for all } v \in \pi, ~~ g\in {\GL}_n(E).
\]
We now consider the $\SL_n^*(E)$-invariant subspace
\[
({\Hom}(\pi, \sigma))^{\SL_n^*(E)} = {\Hom}_{\SL_n^*(E)}(\pi, \sigma).
\]
Recall that $E^1:=\{ a \in E^\times : \mathcal{N}(a)=1 \}$. 
By taking $\sigma= \mathbbm{1}$ the trivial representation of ${\GL}_n(E),$ 
we now have 
\begin{equation} \label{m_alpha}
{\Hom}_{\SL_n^*(E)}(\pi,\mathbbm{1}) = \bigoplus_{\alpha \in \widehat{E^\times/E^1}} m_\alpha \alpha.
\end{equation}
Now using Theorem \ref{mult one of Lu}, ${\dim}_{\CC} {\Hom}_{{\GL}_(E)}(\pi, \alpha) \leq 1$ for any character $\alpha$ of $E^\times$ in (\ref{m_alpha}) we obtain 
\begin{equation} \label{malpha}
m_\alpha \leq 1.
\end{equation}
Since $0 \neq {\Hom}_{\SL_n^*(E)}(\pi_{\flat}, \mathbbm{1}) \subset {\Hom}_{\SL_n^*(E)}(\pi,\mathbbm{1})$, there exists $\alpha \in \widehat{E^{\times}/E^{1}}$ such that $m_{\alpha} =1$.
On the other hand we have $E^{\times}/E^1 \hookrightarrow D^{\times}/SL_1(D) \cong F^{\times}$, therefore there exists a character  $\tilde{\alpha} : D^{\times} \rightarrow \CC^{\times}$ such that $\tilde{\alpha}|_{E^{\times}} = \alpha^{-1}$.
For a character $\tilde{\alpha} \in \widehat{D^{\times}}$ which extends $\alpha^{-1}$, the irreducible representation $\pi \otimes \tilde\alpha$ of $\GL_{n}(D)$ is a ${\GL}_n(E)$-distinguished representation and $\pi_{\flat} \subset {\Res}^{{\GL}_n(D)}_{\SL_n(D)} ~ (\pi \otimes \tilde{\alpha})$. 
\end{proof}

\begin{lm} \label{lemma1}
Let $\pi$ be an irreducible admissible representations of $\GL_{n}(D)$ which is $\GL_{n}(E)$-distinguished. 
Let $m$ be the common multiplicity of an irreducible representation $\pi_{i}^{+}$ of $\GL_{n}(D)^{+}$ such that $\pi_{i}^{+} \subset {\Res}^{{\GL}_{n}(D)}_{{\GL}_{n}(D)^+} \pi$, as in \eqref{decomp of Res 1}.
Write
\begin{equation} \label{eqn in restriction G to G^+}
{\Res}^{{\GL}_{n}(D)}_{{\GL}_{n}(D)^+} \pi \s m \left( \bigoplus_{i} \pi^+_i \right).
\end{equation} 
Then, $m=1$ and there is exactly one $\pi^+_{i}$ which is $\GL_{n}(E)$-distinguished.
\end{lm}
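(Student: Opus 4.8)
The plan is to analyze the restriction ${\Res}^{{\GL}_n(D)}_{{\GL}_n(D)^+}\pi$ using the general decomposition \eqref{decomp of Res 1} together with the multiplicity-one result Theorem \ref{mult one of Lu}, exploiting that $\GL_n(D)^+$ sits between $\SL_n(D)$ and $\GL_n(D)$ and contains $\GL_n(E)$. First I would recall that by \eqref{decomp of Res 1} applied to the pair $\GL_n(D)^+ \subset \GL_n(D)$, the restriction is $m\big(\bigoplus_i \pi_i^+\big)$ where the $\pi_i^+$ form a single orbit under the action of $\widehat{\GL_n(D)/\GL_n(D)^+}$ by twisting, and $m=\langle \pi\rangle_{\GL_n(D)^+}$ is the common multiplicity. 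Since $\GL_n(D)/\GL_n(D)^+ \s F^\times/\mathcal{N}(E^\times)$ is a group of order $2$ (as noted after \eqref{sl*}), the quotient $\widehat{\GL_n(D)/\GL_n(D)^+}$ has order $2$, so $\Pi_\pi(\GL_n(D)^+)$ has at most two elements and, by \eqref{cardinality}, $|X(\pi)| = |\Pi_\pi(\GL_n(D)^+)|\cdot m^2 \in \{1,2\}$; in particular $m=1$ and $\Pi_\pi(\GL_n(D)^+) = \Pi_\pi^+$ is either $\{\pi^+\}$ or $\{\pi^+, \pi^+\otimes\eta\}$ with $\pi^+ \not\s \pi^+\otimes\eta$, where $\eta$ is the nontrivial character of $\GL_n(D)$ trivial on $\GL_n(D)^+$.

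Next I would establish existence of at least one $\GL_n(D)$-distinguished summand. Since $\pi$ is $\GL_n(E)$-distinguished and $\GL_n(E) \subset \GL_n(D)^+$, we have
\[
0 \neq {\Hom}_{\GL_n(E)}(\pi, \mathbbm{1}) = \bigoplus_i {\Hom}_{\GL_n(E)}(\pi_i^+, \mathbbm{1}),
\]
so some $\pi_i^+$ is $\GL_n(E)$-distinguished; call it $\pi^+$. For uniqueness, suppose both $\pi^+$ and $\pi^+\otimes\eta$ were $\GL_n(E)$-distinguished (the only other summand in the nontrivial case). Since $\eta$ is a character of $\GL_n(D)$, its restriction to $\GL_n(E)$ is $\mu\circ\det$ for some character $\mu$ of $E^\times$ — indeed $\eta$ factors through $\mathcal{N}\circ\det : \GL_n(D)\to F^\times$, so $\eta|_{\GL_n(E)} = \mu\circ\det$ with $\mu$ a character of $E^\times$ of the form $(\mathcal{N}_{E/F})$ composed with a character of $F^\times/\mathcal{N}(E^\times)$. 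Then $\GL_n(E)$-distinction of $\pi^+\otimes\eta$ gives ${\Hom}_{\GL_n(E)}(\pi, \mu^{-1}\circ\det) \supseteq {\Hom}_{\GL_n(E)}(\pi^+\otimes\eta, \mathbbm{1}) \neq 0$, while $\GL_n(E)$-distinction of $\pi^+$ gives ${\Hom}_{\GL_n(E)}(\pi, \mathbbm{1}) \neq 0$. Adding these inside ${\Hom}_{\GL_n(E)}({\Res}\,\pi, -)$ and contemplating the two distinct characters $\mathbbm{1}$ and $\mu^{-1}\circ\det$ of $\GL_n(E)$, one sees that both $\pi^+$ and $\pi^+\otimes\eta$ contributing would force a contradiction with Theorem \ref{mult one of Lu} once one checks that both contributions can be traced back to the \emph{same} character of $\GL_n(E)$; more precisely, since $\eta|_{\GL_n(E)}$ is trivial on $\SL_n^*(E)$ and $\widehat{E^\times/E^1}$ is finite, the argument of Proposition \ref{key pro} shows $\dim {\Hom}_{\GL_n(E)}(\pi, \alpha)\le 1$ for every $\alpha$, and the two summands would land in the \emph{same} $\alpha$-isotypic piece, contradicting $\pi^+\not\s\pi^+\otimes\eta$.

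The main obstacle I anticipate is the last step: pinning down precisely why $\pi^+$ and $\pi^+\otimes\eta$ cannot \emph{both} be $\GL_n(E)$-distinguished. The subtlety is that $\eta|_{\GL_n(E)}$ need not be trivial, so a priori the two summands are distinguished by \emph{different} characters of $\GL_n(E)$ and Theorem \ref{mult one of Lu} does not immediately apply. The resolution should be that $\eta|_{\GL_n(E)} = \mu\circ\det$ where $\mu$ is nontrivial on $F^\times$ but — crucially — whether $\mu$ is trivial on $E^1$ determines everything: if $\mu$ is nontrivial on $E^1$ then neither $\pi^+$ nor its twist can both be distinguished by the same character, and if $\mu|_{E^1}$ is trivial then $\eta|_{\SL_n^*(E)}=\mathbbm{1}$ and the two summands contribute to the \emph{same} coinvariant space ${\Hom}_{\SL_n^*(E)}(\pi,\mathbbm{1})$ in a way controlled by \eqref{malpha}. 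I would handle this by working with the full decomposition ${\Hom}_{\GL_n(E)}(\pi, \mathbbm{1}) = \bigoplus_i {\Hom}_{\GL_n(E)}(\pi_i^+,\mathbbm{1})$ and using the transitivity of the twisting action together with $\dim_{\CC}{\Hom}_{\GL_n(E)}(\pi,\alpha)\le 1$ for all $\alpha\in\widehat{E^\times}$, concluding that a $\GL_n(E)$-fixed functional on $\pi$ cannot simultaneously come from two non-isomorphic summands.
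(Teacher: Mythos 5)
Your argument that $m=1$ via \eqref{cardinality} is correct; indeed the paper itself records essentially this observation in Remark \ref{remsize+}: $|X(\pi)| = m^2\cdot|\Pi_\pi(\GL_n(D)^+)|$ and $\widehat{\GL_n(D)/\GL_n(D)^+}$ has order 2, forcing $m=1$. This is a valid alternate route to the first half of the lemma.

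However, your treatment of the uniqueness part is where there is a real gap, and it is a self-inflicted one. The statement to prove is simply that exactly one constituent $\pi_i^+$ has $\Hom_{\GL_n(E)}(\pi_i^+,\mathbbm{1})\neq 0$. Apply $\Hom_{\GL_n(E)}(\cdot,\mathbbm{1})$ to \eqref{eqn in restriction G to G^+} (using $\GL_n(E)\subset\GL_n(D)^+$) to get
\[
\Hom_{\GL_n(E)}(\pi,\mathbbm{1}) \s m\left(\bigoplus_i \Hom_{\GL_n(E)}(\pi_i^+,\mathbbm{1})\right).
\]
By Theorem \ref{mult one of Lu} and the hypothesis that $\pi$ is $\GL_n(E)$-distinguished, the left side is exactly one-dimensional. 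Since the right side is $m$ times a sum of nonnegative integers which is positive, this forces $m=1$ \emph{and} that exactly one summand is one-dimensional and the rest vanish. That is the whole proof; it yields $m=1$ and uniqueness simultaneously.

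Your proposal instead worries at length about whether $\pi^+$ and $\pi^+\otimes\eta$ might both be $\GL_n(E)$-distinguished ``via different characters $\mu\circ\det$,'' but distinction here is always with respect to the fixed trivial character $\mathbbm{1}$, so the decomposition above already settles it with no case analysis on $\eta|_{\GL_n(E)}$. Moreover the chain you wrote down is incorrect: if $\pi^+\otimes\eta\subset\pi|_{\GL_n(D)^+}$, then $\Hom_{\GL_n(E)}(\pi^+\otimes\eta,\mathbbm{1})$ embeds into $\Hom_{\GL_n(E)}(\pi,\mathbbm{1})$, \emph{not} into $\Hom_{\GL_n(E)}(\pi,\mu^{-1}\circ\det)$ as you claim. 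The subsequent reasoning about ``the same $\alpha$-isotypic piece'' is not needed. You should drop the detour through twisted characters and just read off the answer from the direct-sum decomposition of $\Hom_{\GL_n(E)}(\pi,\mathbbm{1})$.
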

\begin{proof}
Note that $\GL_{n}(E) \subset \GL_{n}(D)^{+}$ therefore
\begin{equation} \label{decom+}
{\Hom}_{\GL_n(E)} (\pi, \mathbbm{1} ) \s m \left( \bigoplus_{i} {\Hom}_{\GL_n(E)} ( \pi^+_i, \mathbbm{1}) \right).
\end{equation}
By Theorem \ref{mult one of Lu}, the ${\Hom}_{\GL_n(E)} (\pi, \mathbbm{1} ) \s \CC$, since $\pi$ is assumed to be $\GL_{n}(E)$-distinguished then Equation \eqref{decom+} gives $m =1$ and it follows that exactly one of the $\pi_{i}^{+}$ is $\GL_{n}(E)$-distinguished.
\end{proof}
\begin{rem} \label{remsize+}
As an alternative way to get $m=1$ in Lemma \ref{lemma1}, we apply \eqref{cardinality}
to get the following
\[
\Big|\{ \chi \in \widehat{{\GL}_n(D)/{\GL}_n(D)^+} : \pi \s \pi \otimes \chi \} \Big| = m^2 \cdot \big|\Pi_{\pi}({\GL}_n(D)^+) \big|.
\]
Since 
\[
{\GL}_n(D)/{\GL}_n(D)^+ \s F^\times/\mathcal{N}(E^\times),
\]
whose order is $2.$ Hence, we have
\[
m=1~\text{ and } \big|\Pi_{\pi}({\GL}_n(D)^+) \big| \leq 2.
\]
\end{rem}

\begin{lm} \label{lemma2}
Let $\pi^{+} \in \Irr({\GL}_n(D)^{+})$ be ${\GL}_n(E)$-distinguished. 
Let $\pi_{\flat}, \pi_{\flat}'$ are contained in $\Res^{{\GL}_n(D)^{+}}_{\SL_n(D)} (\pi^{+})$. 
Then
\[
{\Hom}_{\SL_n^*(E)} (\pi_{\flat},\mathbbm{1}) \cong {\Hom}_{\SL_n^*(E)} (\pi_{\flat}',\mathbbm{1}).
\]
\end{lm}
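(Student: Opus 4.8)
The plan is to reduce the statement to the fact that any two irreducible constituents of $\Res^{{\GL}_n(D)^+}_{\SL_n(D)} \pi^+$ are conjugate by an element of $\GL_n(E)$, and then to transport the space of $\SL_n^*(E)$-invariant functionals along such a conjugation.

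First I would invoke Clifford theory for the normal subgroup $\SL_n(D)$ of $\GL_n(D)^+$: by the decomposition \eqref{decomp of Res 1} together with \cite{gk82, tad92, hs11}, the restriction $\Res^{{\GL}_n(D)^+}_{\SL_n(D)} \pi^+$ is a direct sum of $\GL_n(D)^+$-conjugates of a single irreducible representation, each occurring with the same multiplicity, so $\GL_n(D)^+$ permutes the finite set $\Pi_{\pi^+}(\SL_n(D))$ transitively. Since $\GL_n(D)^+ = \SL_n(D) \cdot \GL_n(E)$ and conjugation by an element of the normal subgroup $\SL_n(D)$ fixes the isomorphism class of any representation of $\SL_n(D)$, for $\pi_\flat, \pi_\flat' \in \Pi_{\pi^+}(\SL_n(D))$ there exists $g \in \GL_n(E)$ with $\pi_\flat' \cong \pi_\flat^{\,g}$, where $\pi_\flat^{\,g}$ denotes the representation $x \mapsto \pi_\flat(g^{-1}xg)$ on the underlying space of $\pi_\flat$.

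Next I would use that $\SL_n^*(E) = \{ h \in \GL_n(E) : \mathcal{N}(\det h) = 1 \}$ is the kernel of the homomorphism $\mathcal{N} \circ \det \colon \GL_n(E) \to F^\times$, hence a normal subgroup of $\GL_n(E)$; in particular conjugation by $g \in \GL_n(E)$ preserves $\SL_n^*(E)$. It then follows that the identity on underlying vector spaces carries $\Hom_{\SL_n^*(E)}(\pi_\flat, \mathbbm{1})$ isomorphically onto $\Hom_{\SL_n^*(E)}(\pi_\flat^{\,g}, \mathbbm{1})$: if $\ell$ is $\SL_n^*(E)$-invariant for $\pi_\flat$, then for $h \in \SL_n^*(E)$ and $v$ in the space of $\pi_\flat$ one has $\ell(\pi_\flat^{\,g}(h)v) = \ell(\pi_\flat(g^{-1}hg)v) = \ell(v)$ because $g^{-1}hg \in \SL_n^*(E)$, and conjugation by $g^{-1}$ gives the inverse map. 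Combining this with $\pi_\flat' \cong \pi_\flat^{\,g}$ yields $\Hom_{\SL_n^*(E)}(\pi_\flat', \mathbbm{1}) \cong \Hom_{\SL_n^*(E)}(\pi_\flat^{\,g}, \mathbbm{1}) \cong \Hom_{\SL_n^*(E)}(\pi_\flat, \mathbbm{1})$, which is the assertion.

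The only genuinely delicate point, and the step I would be most careful about, is that the conjugating element can be taken inside $\GL_n(E)$ rather than merely in $\GL_n(D)^+$ or $\GL_n(D)$: a general element of $\GL_n(D)$ normalizes neither $\GL_n(E)$ nor $\SL_n^*(E)$, so the transport argument would collapse. The reduction to $\GL_n(E)$ is exactly what the factorization $\GL_n(D)^+ = \SL_n(D) \cdot \GL_n(E)$ provides. I would also remark that the triviality of the target character enters only to guarantee that it is fixed under $\GL_n(E)$-conjugation; for a nontrivial character $\chi$ of $\SL_n^*(E)$ the same computation instead relates $\Hom_{\SL_n^*(E)}(\pi_\flat, \chi)$ and $\Hom_{\SL_n^*(E)}(\pi_\flat', \chi')$ for the corresponding $\GL_n(E)$-conjugate $\chi'$ of $\chi$.
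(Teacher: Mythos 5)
Your proposal is correct and follows essentially the same route as the paper: the paper's one-line proof simply cites the existence of $g \in \GL_n(E)$ with $\pi_\flat' \cong \pi_\flat^{\,g}$, and your argument supplies exactly the justification for that fact (Clifford-theoretic transitivity of $\GL_n(D)^+$ on $\Pi_{\pi^+}(\SL_n(D))$, reduced to $\GL_n(E)$ via the factorization $\GL_n(D)^+ = \SL_n(D)\cdot\GL_n(E)$ and the triviality of inner conjugation) together with the standard transport of invariant functionals using normality of $\SL_n^*(E)$ in $\GL_n(E)$. Nothing is missing and no genuinely different machinery is used, so this is a faithful, fleshed-out version of the paper's proof.
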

\begin{proof}
It follows from the fact that there is an element $g \in {\GL}_n(E)$ such that $\pi_{\flat}' \cong \pi_{\flat}^{g}$.   
\end{proof}

\begin{defn} \label{def XYZ}
Let $\pi$ be an irreducible admissible representation of $\GL_{n}(D)$. We define
\[
\begin{array}{cl}
X_\pi :=& \{\alpha \in \widehat{E^\times/E^1}: \pi~\text{ is }~ \alpha-\text{distinguished}\}, \\
 Z_\pi := & \{\chi \in \widehat{F^\times} : \pi \s \pi \otimes \chi \},   \\
 Y_\pi:= &\{\chi \in Z_\pi : \chi|_{\mathcal{N}(E^\times)} = 1\}.
\end{array}
\]
\end{defn}
The following is the main theorem which is an extension of \cite[Proposition 3.6]{ap18}.

\begin{thm} \label{mainthem}
Let $\pi_\flat \in \Irr(\SL_n(D))$ be $\SL_n^*(E)$-distinguished. Choose an ${\GL}_n(E)$-distinguished representation $\pi$ of ${\GL}_n(D)$ such that  
\[
\pi_\flat \subset {\Res}^{{\GL}_n(D)}_{\SL_n(D)} (\pi).
\]
Denote by $\rho_{\pi_\flat} \in \Irr\big(\cS_{\vp, \scn}({^L}{\SL}_n(D)^\circ), ~ \zeta_{{\SL}_n(D)} \big)$ via \eqref{bij sl} the internal structure of $L$-packet of $\SL_n(D),$ (refer to Section \ref{restrictionsection}).
Let  $\mathcal{C}_{\pi}$ be the number of irreducible representations of ${\GL}_n(D)^+$ containing $\pi_\flat$ in ${\Res}^{{\GL}_n(D)}_{{\GL}_n(D)^+} \pi.$
Then, we have
\begin{equation} \label{mainequation}
{\dim}_{\CC} {\Hom}_{\SL_n^*(E)} (\pi_\flat,\mathbbm{1}) = \frac{|X_\pi|}{|Z_\pi/Y_\pi|} \cdot \frac{{\dim} \rho_{\pi_\flat}}{\mathcal{C}_{\pi}^2}.
\end{equation} 
In particular, $\mathcal{C}_{\pi} \in \{1, 2\}$. We have $\mathcal{C}_{\pi}=1$ unless ${\Res}^{{\GL}_n(D)}_{{\GL}_n(D)^+} \pi \cong \pi^{+} \oplus \pi^{+} \otimes \eta$ for some $\eta \in \widehat{\GL_{n}(D)^{+}}$ and $\pi^{+} \in \Irr(\GL_{n}(D)^{+})$ with $\pi^{+} \not\s \pi^{+} \otimes \eta$. 
\end{thm}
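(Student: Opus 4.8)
The plan is to relate the $\SL_n^*(E)$-distinction of $\pi_\flat$ to the $\GL_n(E)$-distinction of $\pi$ and its intermediate constituents, pushing all multiplicities through the three layers $\SL_n(D) \subset \GL_n(D)^+ \subset \GL_n(D)$. First I would introduce the $\GL_n(D)^+$-constituents $\pi^+$ of $\Res^{\GL_n(D)}_{\GL_n(D)^+}\pi$; by Lemma~\ref{lemma1} these appear with multiplicity one, exactly one of them, say $\pi^+$, is $\GL_n(E)$-distinguished, and by Lemma~\ref{lemma2} the dimension of $\Hom_{\SL_n^*(E)}(\tau_\flat,\mathbbm{1})$ is the same for every $\tau_\flat \subset \Res^{\GL_n(D)^+}_{\SL_n(D)}\pi^+$; in particular it equals $\dim_\CC\Hom_{\SL_n^*(E)}(\pi_\flat,\mathbbm{1})$, since $\pi_\flat$ occurs in $\pi^+$. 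The first genuine computation is to show $\dim_\CC\Hom_{\SL_n^*(E)}(\pi,\mathbbm{1}) = |X_\pi|$: decompose $\Hom_{\SL_n^*(E)}(\pi,\mathbbm{1}) = \bigoplus_{\alpha\in\widehat{E^\times/E^1}} m_\alpha\,\alpha$ as in \eqref{m_alpha}, note $m_\alpha \le 1$ by Theorem~\ref{mult one of Lu} (via \eqref{malpha}), and observe $m_\alpha = 1$ exactly when $\pi$ is $\alpha$-distinguished, i.e.\ $\alpha \in X_\pi$.

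Next I would track how $\Hom_{\SL_n^*(E)}(\pi,\mathbbm{1})$ breaks up along $\GL_n(D)^+$. Since $\GL_n(E) \subset \GL_n(D)^+$ and $\SL_n^*(E) = \SL_n(D)\cap\GL_n(E) \subset \GL_n(D)^+$, the restriction $\Res^{\GL_n(D)}_{\GL_n(D)^+}\pi = \bigoplus_i \pi_i^+$ gives $\Hom_{\SL_n^*(E)}(\pi,\mathbbm{1}) = \bigoplus_i \Hom_{\SL_n^*(E)}(\pi_i^+,\mathbbm{1})$, and by Lemma~\ref{lemma2} applied inside each $\pi_i^+$, together with the fact (Lemma~\ref{lemma1}) that only one $\pi_i^+$ — call it $\pi^+$ — is $\GL_n(E)$-distinguished, one gets that the full space collapses to contributions from $\pi^+$ (and its $\widehat{\GL_n(D)^+/\SL_n(D)}$-twists among the $\pi_i^+$). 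Then I would descend from $\pi^+$ to $\SL_n(D)$: using \eqref{decomp of Res 1}, \eqref{cardinality} and \eqref{dim=dim=multi}, the constituent $\pi_\flat$ appears in $\Res^{\GL_n(D)^+}_{\SL_n(D)}\pi^+$ with multiplicity $\langle\pi^+\rangle_{\SL_n(D)}$, which is computed by the Hiraga--Saito dictionary to be $\dim\rho_{\pi_\flat}/\mathcal{C}_\pi$ (the $\mathcal{C}_\pi$ accounting for the further splitting of the $\GL_n(D)$-packet data across the $\mathcal{C}_\pi$ constituents $\pi_i^+$ that contain $\pi_\flat$). Assembling: $|X_\pi| = \dim_\CC\Hom_{\SL_n^*(E)}(\pi,\mathbbm{1})$ equals $\dim\rho_{\pi_\flat}$, times $|\{\tau_\flat\subset\pi^+|_{\SL_n(D)}\}/\!\sim|$, times $\dim_\CC\Hom_{\SL_n^*(E)}(\pi_\flat,\mathbbm{1})$, as in \eqref{imp equation}. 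Comparing this with the cardinality of $\{\chi\in\widehat{\GL_n(D)^+/\SL_n(D)} : \pi^+\otimes\chi\cong\pi^+\}$, which by \eqref{cardinality} equals $(\dim\rho_{\pi_\flat}/\mathcal{C}_\pi)^2 \cdot |\{\tau_\flat\subset\pi^+|_{\SL_n(D)}\}/\!\sim|$ and which I would independently identify with $|Z_\pi/Y_\pi|$ via the isomorphism $\GL_n(D)^+/\SL_n(D)\cong\mathcal{N}(E^\times)$ and the restriction map $\widehat{F^\times}\to\widehat{\mathcal{N}(E^\times)}$, the common factor $|\{\tau_\flat\subset\pi^+|_{\SL_n(D)}\}/\!\sim|$ cancels and yields \eqref{mainequation}.

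Finally, for the "in particular" clause: $\mathcal{C}_\pi$ counts the $\pi_i^+ \subset \Res^{\GL_n(D)}_{\GL_n(D)^+}\pi$ that contain $\pi_\flat$ upon further restriction to $\SL_n(D)$. Since $\GL_n(D)/\GL_n(D)^+ \cong F^\times/\mathcal{N}(E^\times)$ has order $2$, Remark~\ref{remsize+} (i.e.\ Clifford theory via \eqref{cardinality}) forces $|\Pi_\pi(\GL_n(D)^+)| \le 2$, so $\Res^{\GL_n(D)}_{\GL_n(D)^+}\pi$ is either irreducible ($\mathcal{C}_\pi = 1$) or a sum $\pi^+ \oplus (\pi^+\otimes\eta)$ of two inequivalent constituents conjugate under the order-$2$ quotient. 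In the irreducible case $\mathcal{C}_\pi = 1$ trivially. In the reducible case, $\pi_\flat$ is contained in $\Res^{\GL_n(D)}_{\SL_n(D)}\pi = \Res(\pi^+)\oplus\Res(\pi^+\otimes\eta)$; whether $\pi_\flat$ lies in one or both summands is exactly the alternative $\mathcal{C}_\pi \in \{1,2\}$, and no third value is possible because there are only two summands. Hence $\mathcal{C}_\pi \in \{1,2\}$, and $\mathcal{C}_\pi = 1$ unless we are in the displayed reducible situation. The main obstacle I anticipate is the bookkeeping in the two-step descent — precisely matching the Hiraga--Saito packet multiplicities for $\GL_n(D) \to \SL_n(D)$ against the refined multiplicities for $\GL_n(D)^+ \to \SL_n(D)$, and verifying that the ratio of the two is exactly $\mathcal{C}_\pi$ (equivalently, that $\langle\pi^+\rangle_{\SL_n(D)} = \dim\rho_{\pi_\flat}/\mathcal{C}_\pi$); this is where the interplay of Clifford theory with the internal structure of $L$-packets must be handled carefully.
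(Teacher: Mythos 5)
Your proposal is correct and follows essentially the same route as the paper's proof: establish $\dim_\CC\Hom_{\SL_n^*(E)}(\pi,\mathbbm{1})=|X_\pi|$ via \eqref{m_alpha}--\eqref{malpha}, descend through $\GL_n(D)^+$ using Lemmas~\ref{lemma1} and \ref{lemma2}, obtain \eqref{imp equation} and \eqref{imp equation 2}, identify $|Z_\pi|/|Y_\pi|$ with $|\{\chi\in\widehat{\GL_n(D)^+/\SL_n(D)}:\pi^+\otimes\chi\cong\pi^+\}|$, cancel the common packet-size factor, and invoke Remark~\ref{remsize+} together with $\mathcal{C}_\pi\cdot\langle\pi^+\rangle_{\SL_n(D)}=\dim\rho_{\pi_\flat}$ for the $\mathcal{C}_\pi^2$ factor. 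The only minor imprecision is calling the $\pi_i^+$ ``$\widehat{\GL_n(D)^+/\SL_n(D)}$-twists'' of $\pi^+$ (they are $\GL_n(D)$-conjugates), but this parenthetical remark does not affect the argument.
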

\begin{proof}
Using multiplicity one from Theorem \ref{mult one of Lu}, we get
\[
\dim_{\CC}{\Hom}_{\SL_n^*(E)}(\pi,\mathbbm{1}) = |X_\pi|.
\]
We note from Lemma \ref{lemma1} that $\pi^+$ is the only  $\GL_n(E)$-distinguished representation of ${\GL}_n(D)^+$ with multiplicity one in ${\Res}^{{\GL}_n(D)}_{\SL_n(D)} (\pi)$. 
Then, we have the decomposition
\begin{equation} \label{decomposition of pi+}
\pi^+|_{\SL_n(D)} \s \langle \pi^+  \rangle_{\SL_n(D)} 
\left( \bigoplus_{\{\tau_\flat \in \Irr({\SL}_n(D)): \tau_\flat \subset \pi^+|_{\SL_n(D)}\}/\sim}~\tau_\flat \right).
\end{equation}
For the representations $\tau_{\flat} \in \Irr(\SL_{n}(D))$ such that $\tau_{\flat} \subset {\Res}_{\SL_{n}(D)}^{\GL_{n}(D)^{+}} (\pi^{+})$, we have
\[
\big|\{\tau_\flat \in \Irr({\SL}_n(D)): \tau_\flat \subset \pi^+|_{\SL_n(D)}\}/\sim \big| = \big|\{\tau_\flat \in \Irr({\SL}_n(D)): \tau_\flat \subset \pi|_{\SL_n(D)}\}/\sim \big|.
\]
Due to Lemma \ref{lemma2}, we have
\begin{align}
\dim_{\CC}{\Hom}_{\SL_n^*(E)}(\pi,\mathbbm{1}) =&  \dim_{\CC} {\Hom}_{\SL_{n}(D)} (\pi, \pi_{\flat}) ~\cdot~ 
\dim_{\CC} {\Hom}_{\SL_n^*(E)}(\pi_\flat,\mathbbm{1})  \nonumber \\
 & ~ \cdot~ \big|\{\tau_\flat \in \Irr({\SL}_n(D)): \tau_\flat \subset \pi^+|_{\SL_n(D)}\}/\sim \big|.  \label{imp equation}
\end{align}
Since $\dim_{\CC} {\Hom}_{\SL_{n}(D)} (\pi, \pi_{\flat}) = \dim_{\CC} \rho_{\pi_\flat}$ by \eqref{dim=dim=multi}, we get
\begin{align}
\dim_{\CC}{\Hom}_{\SL_n^*(E)}(\pi,\mathbbm{1}) =&  \dim_{\CC} \rho_{\pi_\flat} ~\cdot~ 
\dim_{\CC} {\Hom}_{\SL_n^*(E)}(\pi_\flat,\mathbbm{1})  \nonumber \\
 & ~ \cdot~ \big|\{\tau_\flat \in \Irr({\SL}_n(D)): \tau_\flat \subset \pi^+|_{\SL_n(D)}\}/\sim \big|.  \label{imp equation}
\end{align}
On the other hand, due to \eqref{cardinality}, we have 
\begin{equation} \label{equality}
\dim {\Hom}_{\SL_n(D)}(\pi^+, \pi^+) = |\{\chi \in \widehat{({\GL}_n(D)^+/{\SL}_n(D))} : \pi^+ \otimes \chi = \pi^+ \}|.
\end{equation}
It follows that the right hand side of \eqref{equality} becomes
\[
\frac{|\{\chi \in \widehat{F^\times} : \pi \otimes \chi \s \chi\}|}{|\{\chi \in \widehat{F^\times}: \pi \otimes \chi \s \pi \text{ and } \chi|_{\mathcal{N}(E^\times)} =1 \}|},
\]
which equals
\[
|Z_\pi|/|Y_\pi|.
\]
Therefore, 
\begin{equation}
\dim_{\CC} {\Hom}_{\SL_n(D)}(\pi^+, \pi^+) = |Z_\pi|/|Y_\pi|.
\end{equation}
On the other hand, due to \eqref{cardinality}, \eqref{equality}, and \eqref{decomposition of pi+}, 
we have
\begin{equation} \label{imp equation 2}
\dim_{\CC} {\Hom}_{\SL_n(D)}(\pi^+, \pi^+) = \langle \pi^+ \rangle_{\SL_n(D)}^2 \cdot |\{\tau_\flat \in \Irr({\SL}_n(D)): \tau_\flat \subset \pi^+|_{\SL_n(D)}\}/\sim|.
\end{equation}
Combining and \eqref{imp equation 2} and \eqref{imp equation}, we have
\begin{equation} \label{lastequality}
\begin{array}{ll}
\dfrac{|X_\pi|}{\dim_{\CC} \rho_{\pi_\flat} \cdot \dim {\Hom}_{\SL_n^*(E)}(\pi_\flat,\mathbbm{1})}
& = \big|\{\tau_\flat \in \Irr({\SL}_n(D)): \tau_\flat \subset \pi^+|_{\SL_n(D)}\}/\sim\big|  \\
& = \dfrac{|Z_\pi|/|Y_\pi|}{\langle \pi^+ \rangle_{\SL_n(D)}^2}.
\end{array} 
\end{equation}
Following  Remark \ref{remsize+}, 
if there is no $\eta \in \widehat{{\GL}_n(D)^+}$ such that $\pi^+\eta \subset {\Res}^{{\GL}_n(D)}_{{\GL}_n(D)^+} \pi$ and $\pi^+ \not\s \pi^+ \otimes \eta,$ then  
\[ 
\dim_{\CC} {\Hom}_{\SL_{n}(D)} (\pi, \pi_{\flat})= \dim_{\CC} {\Hom}_{\SL_{n}(D)} (\pi^{+}, \pi_{\flat}). 
\]
Otherwise, ${\Res}^{{\GL}_n(D)}_{{\GL}_n(D)^+} \pi \cong \pi^{+} \oplus \pi^{+} \otimes \eta$ and then 
\[ 
\dim_{\CC} {\Hom}_{\SL_{n}(D)} (\pi, \pi_{\flat})= 2 \cdot \dim_{\CC} {\Hom}_{\SL_{n}(D)} (\pi^{+}, \pi_{\flat}). 
\]
So, we have
\[
\mathcal{C}_{\pi} \cdot \langle \pi^+ \rangle_{\SL_n(D)} =\dim_{\CC} \rho_{\pi_\flat},
\]
where $\mathcal{C}_{\pi} \in \{1, 2\}.$ 
Thus, the equality \eqref{lastequality} yields
\[
{\dim}_{\CC} {\Hom}_{\SL_n^*(E)} (\pi_\flat,\mathbbm{1}) = \frac{|X_\pi|}{|Z_\pi/Y_\pi|} \cdot \frac{{\dim} \rho_{\pi_\flat}}{\mathcal{C}_{\pi}^2}.
\]
Therefore, we have the theorem.
\end{proof}

Now we prove a necessary condition for the representation $\pi_{\flat} \in \Irr(\SL_{2}(D)$ which is $\SL_2^*(E)$-distinguished. 
This condition is stated in terms of a certain degenerate Whittaker functional with respect to a non-trivial character of $D$.
This condition is to a result of  Anadavardhanan-Prasad \cite[Lemma~4.1]{ap18}.

Let $N(D) = \left\{ \left( \begin{matrix} 1 & x \\ 0 & 1 \end{matrix} \right) : x \in D \right\} \subset \GL_2(D)$.  
Recall that the map $D \times D \rightarrow F$ given by $(x,y) \mapsto \trace(xy)$ is a non-degenerate $F$-bilinear map, where $\trace$ is the reduced trace map. 
Therefore, we get $D \cong \widehat{D}$ via the map $a \mapsto \psi_{a}$ where $\psi_{a}(x)= \psi_{0}(\trace(ax))$ for all $x \in D$ and $\psi_{0}$ is a non-trivial character of $F$. 
Note that this identification of $D$ and $\widehat{D}$ depends on the choice of $\psi_{0}$.
\begin{defn}
Let $\psi : N(D) \cong D \rightarrow \CC^{\times}$ be a character. For an irreducible representation $\pi$ of $\GL_2(D)$ or of $\SL_2(D)$, a linear functional $\ell : \pi \rightarrow \CC$ is called an $(N(D), \psi)$-{degenerate} Whittaker functional if 
\[
\ell \left( \pi \left( \begin{matrix} 1 & x \\ 0 & 1 \end{matrix} \right) v \right) = \psi(x) \ell(v)
\]
for all $v \in \pi$ and $x \in D$. 
If $\pi$ admits a non-zero $(N(D), \psi)$-{degenerate} Whittaker functional then we call $\pi$ to be $(N(D), \psi)$-generic.
\end{defn}
\begin{rem}
Let $\pi$ be an irreducible admissible representation of $\GL_2(D)$ and $\psi$ a non-trivial character of $D$.
Since every charcter of $D$ is of the form $\psi_{a}$ for some $a \in D$, it follows that if $\pi$ admits a $(N(D), \psi)$-{degenerate} Whittaker functional for a non-trivial character $\psi$ then it also admits non-zero $(N(D), \psi')$-{degenerate} Whittaker functional for all  non-trivial character $\psi'$.
On the other hand, the same is not true for an irreducible representations of $\SL_2(D)$.
If an irreducible representation $\pi_{\flat}$ of $\SL_2(D)$ admits a non-zero $(N(D), \psi)$-{degenerate} Whittaker functional then it also admits a non-zero $(N(D), \psi_{a})$-{degenerate} Whittaker functional for all $a \in \langle \SL_1(D), D^{\times 2} \rangle$ the subgroup generated by  $\SL_1(D)$ and $D^{\times 2}$, where $D^{\times 2} = \{ x^2 : x \in D^{\times} \}$.
Note that $\langle \SL_{1}(D) \cdot D^{\times 2} \rangle$ is a proper subgroup of $D^{\times}$ and $\langle \SL_{1}(D) \cdot D^{\times 2} \rangle = \mathcal{N}^{-1} (F^{\times 2} )$. 
Moreover, $[D^{\times} : \langle \SL_1(D), D^{\times 2} \rangle] = [F^{\times} : F^{\times 2} ]$.
It follows that an irreducible representation of $\SL_2(D)$ or $\GL_2(D)$, which admits a non-trivial $(N(D), \psi)$-{degenerate} Whittaker functional for some non-trivial character $\psi$, must be infinite dimensional.
\end{rem}
\begin{pro} \label{degenerate}
Suppose $\pi_{\flat} \in \Irr(\SL_{2}(D))$ admits a degenerate Whittaker functional for some non-trivial character of $N(D)$. 
If $\pi_{\flat}$ is distinguished by $\SL_{2}^{*}(E)$, then $\pi_{\flat}$ must be $(N(D), \psi)$-generic for a
character $\psi : D \rightarrow \CC^{\times}$ such that $\psi$ is trivial on $E$.
\end{pro}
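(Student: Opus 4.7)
The plan is to use Frobenius reciprocity to convert the $\SL_{2}^{*}(E)$-invariance of a functional on $\pi_{\flat}$ into a constraint on the Whittaker support of $\pi_{\flat}|_{N(D)}$. First, pick a non-zero $\ell_{0} \in \Hom_{\SL_{2}^{*}(E)}(\pi_{\flat}, \mathbbm{1})$; since $N(E) = N(D) \cap \SL_{2}^{*}(E)$, the functional $\ell_{0}$ is in particular $N(E)$-invariant. Frobenius reciprocity then produces a non-zero $N(D)$-equivariant map
\[
\Phi \colon \pi_{\flat} \longrightarrow \Ind_{N(E)}^{N(D)} \mathbbm{1} \;\cong\; C^{\infty}(E \backslash D),
\]
defined by $\Phi(v)(x) = \ell_{0}(\pi_{\flat}(n_{x}) v)$ for $x \in D$, where $n_{x}$ denotes the upper unipotent with entry $x$; non-vanishing is immediate from $\Phi(v)(0) = \ell_{0}(v)$.

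Next, I would identify which characters of $N(D)$ can possibly appear in the image of $\Phi$. Fix a decomposition $D = E \oplus E j$ with $j \in D^{\times}$, $j^{2} \in F^{\times}$, and $j a j^{-1} = \bar{a}$ for $a \in E$. Under the self-duality $D \cong \widehat{D}$ sending $c \mapsto \psi_{c}$ with $\psi_{c}(x) = \psi_{0}(\trace(cx))$, the formula $\trace(a + b j) = \trace_{E/F}(a)$ for $a, b \in E$ shows that the annihilator of $E$ under the reduced trace pairing equals $E j$. Hence the characters of $D$ trivial on $E$ are exactly $\{\psi_{c'} : c' \in E j\}$, and since $N(D) \cong D$ acts on $C^{\infty}(E \backslash D)$ through its quotient $D/E$ whose Pontryagin dual is $E j$, the image $\Phi(\pi_{\flat})$ is a non-zero smooth $D$-submodule of $C^{\infty}(E \backslash D)$ whose ``spectrum'' is contained in $\{\psi_{c'} : c' \in E j\}$.

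The final step is to upgrade the non-zero intertwiner $\Phi$ to an honest Whittaker quotient, i.e.\ to produce $c' \in E j$ with $\Hom_{N(D)}(\pi_{\flat}, \psi_{c'}) \neq 0$. Here the plan is to follow the strategy of \cite[Lemma~4.1]{ap18} in the split case: combine the hypothesis that $\pi_{\flat}$ already admits \emph{some} non-trivial degenerate Whittaker functional with the orbit-invariance of the Whittaker support under $\langle \SL_{1}(D), D^{\times 2} \rangle$ recorded in the remark preceding this proposition, to force the Whittaker support of $\pi_{\flat}|_{N(D)}$ to meet $E j$; otherwise one would obtain vanishing of $\Phi$ on a generating set of $\pi_{\flat}$, contradicting $\Phi \neq 0$. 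The hard part will be precisely this last step: because $\pi_{\flat}|_{N(D)}$ is not admissible as an $N(D)$-representation, one cannot directly decompose $\Phi(\pi_{\flat})$ into isotypic characters, and a delicate Fourier or twisted-Jacquet-module analysis will be needed to extract an honest $\psi_{c'}$-eigenquotient with $c' \in E j$.
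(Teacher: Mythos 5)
Your first two steps are sound and are in effect a dual formulation of what the paper does: a non-zero $\ell_0 \in \Hom_{\SL_2^*(E)}(\pi_\flat,\mathbbm{1})$ is in particular $N(E)$-invariant, so the $N(E)$-coinvariants $\pi_{\flat,E}$ are non-zero (your Frobenius-reciprocity map $\Phi\colon\pi_\flat\to C^\infty(E\backslash D)$ carries the same information), and the annihilator of $E$ under the reduced-trace pairing is $Ej$, so the characters of $D$ trivial on $E$ are exactly $\{\psi_{c'}: c'\in Ej\}$. But the assertion that ``the spectrum of $\Phi(\pi_\flat)$ is contained in $\{\psi_{c'}: c'\in Ej\}$'' is not progress: it only records that $\Phi(\pi_\flat)$ is a $D/E$-module and says nothing about which characters of $D/E$ actually occur — in particular it is consistent with the trivial character being the only one, which is precisely the case you must rule out. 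Your proposed fix (invoke orbit-invariance of the Whittaker support under $\langle\SL_1(D),D^{\times 2}\rangle$ to force the support to meet $Ej$) does not work: that orbit is a coset of a finite-index subgroup of $D^\times$ and there is no reason it should meet the two-dimensional $F$-subspace $Ej$; and the clause ``otherwise $\Phi$ vanishes on a generating set of $\pi_\flat$'' is unjustified.

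The paper closes the gap with a cheaper dichotomy. The non-zero smooth $D/E$-module $\pi_{\flat,E}$ either carries a non-trivial $D/E$-action — in which case some non-trivial character $\psi$ of $D/E$ has non-zero coinvariants, and lifting $\psi$ to a character of $D$ trivial on $E$ gives the required $(N(D),\psi)$-genericity — or $D/E$ acts trivially on it. In the latter case $\ell_0$ is invariant under all of $N(D)$, hence under the subgroup $L$ generated by $N(D)$ and $\SL_2^*(E)$; since $\SL_2(D)/L$ is a quotient of the compact group $E^1\backslash\SL_1(D)$, the irreducible $\pi_\flat$ would have to be finite-dimensional. This contradicts the standing hypothesis of the proposition: a representation of $\SL_2(D)$ with a non-trivial degenerate Whittaker functional is automatically infinite-dimensional (this is exactly the role played by the remark preceding the proposition). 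So the hypothesis is used to kill the ``trivial action'' branch by a compactness argument, rather than to steer the Whittaker support into $Ej$ as you attempt.
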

\begin{proof}
Let $N(E) = N(D) \cap \SL_{2}^{*}(E)$.
Let $\pi_{\flat, E}$ denote the largest quotient of $\pi_{\flat}$ on which $N(E) \cong E$ operates trivially. 
We have $\pi_{\flat, E} \neq 0$, because $\pi_{\flat}$ is distinguished by $\SL_{2}^{*}(E)$ and $N(E) \subset \SL_{2}^{*}(E)$.
Therefore, $\pi_{\flat, E}$ is a smooth module for $N(D)/N(E) \cong D/E$, which we analyze below.
\begin{enumerate}
\item[Case 1:] Suppose $D/E$ operates non-trivially on $\pi_{\flat, E}$. 
Then there exists a nontrivial character $\psi : D/E \rightarrow \CC^{\times}$ such that $\pi_{\flat}$ is $(N(D), \psi)$-generic.
\item[Case 2:] Suppose $D/E$ operates trivially on $\pi_{\flat, E}$. 
Let $\ell : \pi \rightarrow \mathbbm{1}$ be a non-zero $\SL_{2}^{*}(E)$-invariant form.
Then $N(D)$ operates trivially on $\ell$ and hence $\ell$ is also invariant under $N(D)$.
Therefore, the group generated by $N(D)$ and $\SL_{2}^{*}(E)$ operates trivially on $\ell$.
Let $L$ be the subgroup of $\SL_{2}(D)$ generated by $N(D)$ and $\SL_{2}^{*}(E)$.  
It can be observed that the quotient $\SL_{2}(D)/L$ is a quotient of $E^{1} \backslash \SL_{1}(D)$ which is compact.
Therefore, $\pi$ must be a finite dimensional representation being a smooth representation of some compact group.
A contradiction to $\pi_\flat$ admitting a degenerate Whittaker functional for some non-trivial character of $N(D)$. \qedhere
\end{enumerate}
\end{proof}
\section{An application to  ${\GL}_2(D)$}  \label{example}

Let $D$ be a quaternion division algebra over $F$ and $E/F$ is a maximal subfield so that $E/F$ is quadratic. 
We consider $G={\GL}_2(D), H={\GL}_2(E), G_\flat={\SL}_2(D),$ and
\begin{equation} \label{sl*-2}
H_\flat={\SL}^*_2(E) = {\SL}_2(D) \cap {\GL}_2(E) = \{ g \in {\GL}_2(E) : \mathcal{N}(\det(g))=1 \}.
\end{equation}
Let 
\[
B(D) = \left\{ \left( \begin{matrix} a & b \\ 0 & d \end{matrix} \right) : a,d \in {\GL}_1(D) ~~\&~~ b \in D \right\}.
\]
be the minimal parabolic subgroup of $\GL_{2}(D)$.
Let $\pi \in \Irr({\GL}_2(D))$ be the normalized parabolic induction
\[
{\Ind}_{B(D)}^{{\GL}_2(D)} ~(\chi_1  \otimes \chi_2),
\]
where $\chi_1, \chi_2 : D^{\times} \rightarrow \CC^{\times}$ are characters.
We note that $\chi_1 \chi_2^{-1} \neq |\mathcal{N}|^{\pm 1},$ since $\pi$ is irreducible, see \cite[Lemma 2.5]{ta1990}, and throughout the section, we keep this condition.

Recall that $\widehat{D^{\times}} = \widehat{F^{\times}}$.
Then, the irreducible representations in the restriction of $\pi$ to $\SL_2(D)$ form the $L$-packet containing $\pi_\flat$, and it equals 
\[
\{\pi_{\flat} \} ~\text{ or }
\{\pi_\flat, \pi_\flat'\},
\]
where $\pi_\flat, \pi_\flat'$ are irreducible constituents of 
\begin{equation} \label{restosl(2)}
{\Ind}_{B(D) \cap \SL_2(D)}^{\SL_2(D)} ~ (\chi_1 \otimes \chi_2).
\end{equation}
Here, we note that 
\[
B(D) \cap {\SL}_2(D) =  \left\{ \left( \begin{matrix} a & b \\ 0 & d \end{matrix} \right) : a,d \in {\GL}_1(D) ~~\&~~ \mathcal{N}(ab)=1, b \in D \right\}.
\]
For a character $\chi$ of $F^{\times}$, we use the same letter $\chi$ for the character of $\GL_2(D)$ given by $g \mapsto \chi( \mathcal{N}(g))$. 
Then
\[
\chi \otimes {\Ind}_{B(D)}^{{\GL}_2(D)} ~(\chi_1  \otimes \chi_2) \s {\Ind}_{B(D)}^{{\GL}_2(D)} ~(\chi_1  \otimes \chi_2)
\]
if and only if
\[
\chi\chi_1  \otimes \chi\chi_2 = \chi_1  \otimes \chi_2 ~\text{ or }~ \chi\chi_1  \otimes \chi\chi_2 = \chi_2  \otimes \chi_1
\] 
as $\GL_{1}(D) \times \GL_{1}(D)$-module.
Note that we must have $\chi^2 = \mathbbm{1}$ and $\chi = \chi_1 \chi_2^{-1} = \chi_2 \chi_1^{-1}$.
We get 
\[
Z_\pi 
= \left\{ 
\begin{array}{l l}
    \{ \mathbbm {1}, \chi_1\chi_2^{-1}\}, & \: \text{if} ~ \chi_1 \neq \chi_2, (\chi_1 \chi_2^{-1})^2 = \mathbbm{1}\\

    \{ \mathbbm {1} \}, & \: \text{otherwise.} \\
  \end{array}
  \right.
\]

Recall $Y_\pi=\{\chi \in Z_\pi: \chi|_{\mathcal{N}(E^\times)} =1\}$. 
So, we have
\[
Y_\pi 
= \left\{ 
\begin{array}{l l}
    \{\mathbbm{1}, \omega_{E/F} \}, & \: \text{if} ~ \chi_1\chi_2^{-1}=\omega_{E/F}, \\
    \{ \mathbbm {1} \}, & \: \text{otherwise,} \\
  \end{array}
  \right.
\]
where $\omega_{E/F}$ is the quadratic character of $F^\times$ associated to the quadratic extension $E/F$ via the local class field theory.
\subsection{Mackey theory} \label{mackey}
Let $\pi = {\Ind}_{B(D)}^{{\GL}_2(D)} ~(\chi_1  \otimes \chi_2)$, and $\mu : E^{\times} \rightarrow \CC^{\times}$ with $\mu|_{E^1} = 1$, be such that $\Hom_{{\GL}_2(E)} (\pi, \mu \circ \det) \neq 0$. 
By Mackey theory, for $\pi|_{{\GL}_2(E)}$ we need to understand the double cosets 
\[
B(D) \backslash {\GL}_2(D) / {\GL}_2(E).
\]
Note that $B(D) \backslash {\GL}_2(D)$ is $\mathbb{P}^{1}(D)$.
Then the action of ${\GL}_2(E)$ on $\mathbb{P}^{1}(D)$ has two orbits, one is the closed orbit $\mathbb{P}^{1}(E)= B(E) \backslash {\GL}_2(E)$ and the other is the open orbit $\mathbb{P}^{1}(D) \smallsetminus \mathbb{P}^{1}(E) = D^{\times} \backslash {\GL}_2(E)$. Therefore we get the following short exact sequence
\begin{equation} \label{ses}
0 \rightarrow {\Ind}_{D^{\times}}^{{\GL}_2(E)} (\chi_1  \bar{\chi}_2 )\rightarrow {\Res}_{{\GL}_2(E)}^{{\GL}_2(D)} (\pi) \rightarrow {\Ind}_{B(E)}^{{\GL}_2(E)} (\chi_1 \otimes \chi_2) \rightarrow 0.
\end{equation}
where $\bar{\chi}_2$ denotes the character of $D^{\times}$ which is $\chi_2$ pre-composed with the non-trivial involution on $D$.
Since $\chi$ factors through the reduced norm map $\bar{\chi}_2(x) = \chi_2(x)$ for all $x \in D^{\times}$.
For simplicity, for a character $\mu : E^{\times} \rightarrow \CC^{\times}$ we write the character $\mu \circ \det$ of ${\GL}_2(E)$ by $\mu$. 
By applying the functor ${\Hom}_{{\GL}_2(E)}(- , \mu)$ to the above short exact sequence \eqref{ses}, we get the following long exact sequence
\[
0 \rightarrow {\Hom}_{{\GL}_2(E)} \left( {\Ind}_{B(E)}^{{\GL}_2(E)} (\chi_1 \otimes 
\chi_2), \mu \right) \rightarrow 
{\Hom}_{{\GL}_2(E)} (\pi, \mu) \rightarrow {\Hom}_{{\GL}_2(E)} \left( {\Ind}_{D^{\times}}^{{\GL}_2(E)} (\chi_1 \bar{\chi}_2), \mu \right) 
\]
\begin{equation} \label{les}
\rightarrow {\Ext}^{1}_{{\GL}_2(E)} \left( {\Ind}_{B(E)}^{{\GL}_2(E)} (\chi_1 \otimes \chi_2), \mu \right) \rightarrow \cdots
\end{equation}
Therefore, $\pi$ is $({\GL}_2(E), \mu)$-distinguished if and only if either ${\Ind}_{B(E)}^{{\GL}_2(E)} (\chi_1 \otimes \chi_2)$ is $({\GL}_2(E), \mu)$-distinguished or ${\Ind}_{D^{\times}}^{{\GL}_2(E)} (\chi_1 \bar{\chi}_2)$ is $({\GL}_2(E), \mu)$-distinguished.
The condition $\chi_1 \chi_2^{-1} \neq |\mathcal{N}|^{\pm 1}$ implies that ${\Ind}_{B(E)}^{{\GL}_2(E)} (\chi_1 \otimes \chi_2) $ is irreducible.
Therefore, ${\Hom}_{{\GL}_2(E)} \left( {\Ind}_{B(E)}^{{\GL}_2(E)} (\chi_1 \otimes \chi_2), \mu \right) =0$ for any character $\mu \in \widehat{E^{\times}}$.
By \cite[Corollary~5.9]{p90}, we know that 
\[ 
{\Hom}_{{\GL}_2(E)} \left( {\Ind}_{B(E)}^{{\GL}_2(E)} (\chi_1 \otimes \chi_2), \mu \right) =0 ~~~\text{if and only if}~~~ 
{\Ext}^{1}_{{\GL}_2(E)} \left( {\Ind}_{B(E)}^{{\GL}_2(E)} (\chi_1 \otimes \chi_2), \mu \right) =0.
\]
From the long exact sequence in \eqref{les}, we conclude that 
\[
{\Hom}_{{\GL}_2(E)} (\pi, \mu) \cong {\Hom}_{{\GL}_2(E)} \left( {\Ind}_{D^{\times}}^{{\GL}_2(E)} (\chi_1 \bar{\chi}_2), \mu \right).
\]
Then, by using Frobenius reciprocity, we get
\[
{\Hom}_{{\GL}_2(E)} \left( {\Ind}_{D^{\times}}^{{\GL}_2(E)} (\chi_1 \bar{\chi}_2), \mu \right) = {\Hom}_{D^{\times}} (\chi_1 \bar{\chi}_2, \mu).
\]
Then, we have
\[
X_{\pi} = \left\{ \mu \in \widehat{E^{\times}} : {\Hom}_{{\GL}_2(E)} \left( {\Ind}_{D^{\times}}^{{\GL}_2(E)} (\chi_1 \bar{\chi}_2), \mu \right) \neq 0 \right\} 
= \left\{ \mu \in \widehat{E^{\times}} : {\Hom}_{D^{\times}} (\chi_1 \bar{\chi}_2, \mu) \neq 0 \right\}.
\]
Now, ${\Hom}_{D^{\times}} (\chi_1 \bar{\chi}_2, \mu) \neq 0$ is equivalent to $\mu(\mathcal{N}(d)) = \chi_1(\mathcal{N}(d)) \chi_2(\mathcal{N}(\bar{d})) = (\chi_1 \chi_2) (\mathcal{N}(d))$ for all $d \in D^{\times}$.
Since the reduced norm map $\mathcal{N} : D^{\times} \rightarrow F^{\times}$ is surjective we get that $\mu|_{F^{\times}} = \chi_1 \chi_2$.
Note that $\mu$ is trivial on $E^1$ and $E^{\times}/E^{1} \s \mathcal{N}(E^{\times}) \subset F^{\times}$.
Note that $[E^{\times} : E^{1} F^{\times} ] = [N(E^{\times}) : F^{\times 2}]$.
So the number of $\mu \in \widehat{E^{\times}/E^{1}}$ for which ${\Ind}_{D^{\times}}^{{\GL}_2(E)} (\chi_1 \bar{\chi}_2)$ is $({\GL}_2(E), \mu)$-distinguished is $[E^{\times} : E^{1} F^{\times} ]$.
Moreover, ${\Ind}_{D^{\times}}^{{\GL}_2(E)} (\chi_1 \bar{\chi}_2)$ is $({\GL}_2(E), \mu)$-distinguished if and only if it is $({\GL}_2(E), \mu \mu_{i})$-distinguished, where $\mu_i$ is any character of $E^{\times}/E^{1}F^{\times}$.
Therefore, we conclude that 
\[
|X_{\pi}| = [E^{\times} : E^{1}F^{\times}] = \frac{1}{2}[F^{\times}: F^{\times 2}].
\]

\subsection{Case A: $\chi_1\chi_2^{-1} = \omega_{E/F}$} \label{caseA} The $L$-packet for $\SL_{2}(D)$ determined by the representation $\pi$ of $\GL_{2}(D)$ contains two elements and hence it equals
\[
\{\pi_{\flat}, \pi_{\flat}'\}.
\]
In this case, we have
\[
|Z_\pi| = 2
~~
\text{ and }
~
Y_\pi=Z_\pi.
\]
Due to \eqref{cardinality}, we have
\[
\dim_{\CC} \rho_{\pi_\flat} =\dim_{\CC} \rho_{\pi'_\flat} =1  \text{ and } \mathcal{C}_\pi=1.
\]
As ${\Res}^{\GL_2(D)}_{\SL_2(D)} (\pi) ={\pi}_{\flat} \oplus {\pi}_{\flat}'$, we have 
\[ {\Hom}_{\SL_2^*(E)} (\pi, \mathbbm{1} ) = {\Hom}_{\SL_2^*(E)} (\pi_{\flat}, \mathbbm{1} ) \bigoplus {\Hom}_{\SL_2^*(E)} (\pi_{\flat}', \mathbbm{1} ).
\]
Then, Section \ref{mackey} yields
\[
{\dim}_{\CC} {\Hom}_{\SL_n^*(E)} (\pi_\flat,\mathbbm{1}) + {\dim}_{\CC} {\Hom}_{\SL_n^*(E)} (\pi_\flat',\mathbbm{1}) = |X_{\pi}|.
\] 
It follows from Theorem \ref{mainthem} that if $\pi_{\flat}$ is $\SL_2^*(E)$-distinguished then 
\[
{\dim}_{\CC} {\Hom}_{\SL_n^*(E)} (\pi_\flat,\mathbbm{1}) = |X_{\pi}|=\frac{1}{2}[F^{\times}: F^{\times 2}]
\]
and ${\dim}_{\CC} {\Hom}_{\SL_n^*(E)} (\pi_\flat',\mathbbm{1}) = 0$.
Therefore, we conclude that exactly one of $\pi_\flat$ or $\pi_\flat'$ is $\SL^*_2(E)$-distinguished.
\subsection{Case B: $(\chi_1\chi_2^{-1})^2=\mathbbm{1}$ but $\chi_1\chi_2^{-1} \notin \{ \omega_{E/F}, \mathbbm{1} \}$} 
The $L$-packet for $\SL_{2}(D)$ determined by the representation $\pi$ of $\GL_{2}(D)$ contains two elements and hence it equals
\[
\{\pi_\flat, \pi_\flat'\}.
\]
In this case, we have
\[
|Z_\pi| = 2
~~
\text{ and }
~
Y_\pi=\{ \mathbbm{1} \}.
\]
Likewise, as in Section \ref{caseA},
we have
\[
{\dim}_{\CC} {\Hom}_{\SL_n^*(E)} (\pi_\flat,\mathbbm{1}) + {\dim}_{\CC} {\Hom}_{\SL_n^*(E)} (\pi_\flat',\mathbbm{1}) = |X_{\pi}|.
\] 
It follows from Theorem \ref{mainthem} that if $\pi_{\flat}$ is $\SL_2^*(E)$-distinguished then
\[
{\dim}_{\CC} {\Hom}_{\SL_n^*(E)} (\pi_\flat,\mathbbm{1})= \frac{|X_{\pi}|}{2}.
\] 
This implies that both $\pi_\flat$ and $\pi_\flat'$ are $\SL^*_2(E)$-distinguished and
\[
{\dim}_{\CC} {\Hom}_{\SL_n^*(E)} (\pi_\flat,\mathbbm{1})
= {\dim}_{\CC} {\Hom}_{\SL_n^*(E)} (\pi_\flat,\mathbbm{1})
= \frac{|X_{\pi}|}{2} = \frac{1}{4}[F^{\times}: F^{\times 2}].
\]

\subsection{Case C: Either $\chi_{1} \chi_2^{-1} = \mathbbm{1}$ or $(\chi_1 \chi_2^{-1})^2 \neq \mathbbm{1}$} 
The $L$-packet for $\SL_{2}(D)$ determined by the representation $\pi$ of $\GL_{2}(D)$ contains only one elements and hence it equals
\[
\{\pi_\flat \}.
\]
In this case, we have
\[
Z_\pi = \{ \mathbbm{1} \} = Y_\pi
\]
Likewise, as in Section \ref{caseA},
we have
\[
{\dim}_{\CC} {\Hom}_{\SL_n^*(E)} (\pi_\flat,\mathbbm{1}) = |X_{\pi}|=\frac{1}{2}[F^{\times}: F^{\times 2}].
\] 

\subsection{}
We summarize the observations made in this section in the following proposition.
\begin{pro} \label{summarypro}
Let $\pi_{\flat}$ be an irreducible representation of $\SL_2(D)$ such that there is an irreducible representation $\pi = {\Ind}_{B(D)}^{{\GL}_{2}(D)} (\chi_1 \otimes \chi_2)$ with $\chi_{1}, \chi_{2} \in \widehat{F^\times}$ such that $\pi_{\flat} \subset {\Res}_{\SL_{2}(D)}^{\GL_{2}(D)}(\pi)$.   
Then, we have $\dim \rho_{\pi_{\flat}} =1$ and $\mathcal{C}_{\pi} =1$.
\begin{enumerate}
\item Assume $\chi_1\chi_2^{-1} = \omega_{E/F}$. Then, the $L$-packet of $\SL_{2}(D)$ containing $\pi_{\flat}$ has two elements, say $\{\pi_\flat, \pi_\flat' \}$, and exactly one of them is $\SL_{2}^{*}(E)$-distinguished. 
Moreover, if $\pi_{\flat}$ is $\SL_{2}^{*}(E)$-distinguished then 
\[
{\dim}_{\CC} {\Hom}_{\SL_n^*(E)} (\pi_\flat,\mathbbm{1}) =\frac{1}{2}[F^{\times}: F^{\times 2}].
\]

\item Assume $(\chi_1\chi_2^{-1})^2=\mathbbm{1}$ but $\chi_1\chi_2^{-1} \notin \{ \omega_{E/F}, \mathbbm{1} \}.$ Then, the $L$-packet of $\SL_{2}(D)$ containing $\pi_{\flat}$ has two elements, say $\{\pi_\flat, \pi_\flat' \}$, and both of them are $\SL_{2}^{*}(E)$-distinguished. Moreover, we have
\[
{\dim}_{\CC} {\Hom}_{\SL_n^*(E)} (\pi_\flat,\mathbbm{1})
= {\dim}_{\CC} {\Hom}_{\SL_n^*(E)} (\pi_\flat,\mathbbm{1}) = \frac{1}{4}[F^{\times}: F^{\times 2}].
\]

\item Assume either $\chi_{1} \chi_2^{-1} = \mathbbm{1}$ or $(\chi_1 \chi_2^{-1})^2 \neq \mathbbm{1}.$ Then the $L$-packet of $\SL_{2}(D)$ containing $\pi_{\flat}$ has only one element, namely $\pi_{\flat}$, which is $\SL_2^*(E)$-distinguished. Moreover, we have
\[
{\dim}_{\CC} {\Hom}_{\SL_n^*(E)} (\pi_\flat,\mathbbm{1}) =\frac{1}{2}[F^{\times}: F^{\times 2}].
\] 
\end{enumerate}
\end{pro}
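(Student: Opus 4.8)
The plan is to treat Proposition~\ref{summarypro} as a bookkeeping consolidation of the three case analyses already carried out in Subsection~\ref{caseA} and in the subsequent subsections for Case~B and Case~C. Concretely, the argument splits into two stages: first I would establish the two uniform statements $\dim\rho_{\pi_\flat}=1$ and $\mathcal{C}_\pi=1$; then, armed with the invariants $|X_\pi|$, $|Z_\pi|$ and $|Y_\pi|$ computed in those subsections, I would read off the three itemized formulas from the master identity \eqref{mainequation} of Theorem~\ref{mainthem}.

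For the first stage, I would note that the reduced norm gives $\GL_2(D)/\SL_2(D)\cong F^\times$, so the group $X(\pi)$ occurring in \eqref{cardinality} is precisely the group $Z_\pi$ of Definition~\ref{def XYZ}, and $|Z_\pi|\in\{1,2\}$ since $|F^\times/\mathcal{N}(E^\times)|=2$. Inserting this into \eqref{cardinality} yields $|Z_\pi|=|\Pi_{\pi_\flat}(\SL_2(D))|\cdot\langle\pi_\flat,\pi\rangle^2$, a product of two positive integers bounded by $2$; hence $\langle\pi_\flat,\pi\rangle=1$, and simultaneously the $L$-packet has one element if $|Z_\pi|=1$ and two elements (with multiplicity-free restriction $\Res^{\GL_2(D)}_{\SL_2(D)}\pi=\pi_\flat\oplus\pi_\flat'$) if $|Z_\pi|=2$, which reproduces the packet sizes claimed in the items. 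Then \eqref{dim=dim=multi} gives $\dim\rho_{\pi_\flat}=\langle\pi_\flat,\pi\rangle=1$, and the relation $\mathcal{C}_\pi\cdot\langle\pi^+\rangle_{\SL_n(D)}=\dim\rho_{\pi_\flat}$ proved inside Theorem~\ref{mainthem} forces $\mathcal{C}_\pi=1$.

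For the second stage, I would recall from Section~\ref{mackey} that $|X_\pi|=[E^\times:E^1F^\times]=\tfrac12[F^\times:F^{\times2}]$, in particular $|X_\pi|>0$, so $\pi$ is always $\SL_2^*(E)$-distinguished by the trivial character, and from the proof of Theorem~\ref{mainthem} that $\dim_{\CC}\Hom_{\SL_2^*(E)}(\pi,\mathbbm{1})=|X_\pi|$. In Case~C, $Z_\pi=Y_\pi=\{\mathbbm{1}\}$ so $|Z_\pi/Y_\pi|=1$, and since $\Res^{\GL_2(D)}_{\SL_2(D)}\pi=\pi_\flat$, the formula \eqref{mainequation} gives $\dim_{\CC}\Hom_{\SL_2^*(E)}(\pi_\flat,\mathbbm{1})=|X_\pi|=\tfrac12[F^\times:F^{\times2}]$. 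In Cases~A and~B the restriction is $\pi_\flat\oplus\pi_\flat'$, hence $\dim_{\CC}\Hom_{\SL_2^*(E)}(\pi_\flat,\mathbbm{1})+\dim_{\CC}\Hom_{\SL_2^*(E)}(\pi_\flat',\mathbbm{1})=|X_\pi|$; in Case~A, $Y_\pi=Z_\pi$ gives $|Z_\pi/Y_\pi|=1$, so a distinguished member contributes $|X_\pi|$, forcing exactly one of $\pi_\flat,\pi_\flat'$ to be distinguished; in Case~B, $Y_\pi=\{\mathbbm{1}\}$ and $|Z_\pi|=2$ give $|Z_\pi/Y_\pi|=2$, so a distinguished member contributes $\tfrac12|X_\pi|=\tfrac14[F^\times:F^{\times2}]$, and the additive identity then forces both members to be distinguished with this common value.

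The hard part is not any single computation: multiplicity one for $\GL_2(E)$-distinction (Theorem~\ref{mult one of Lu}), the Mackey/double-coset evaluation of $X_\pi$, the master formula of Theorem~\ref{mainthem}, and the Hiraga--Saito dictionary \eqref{dim=dim=multi} are all already available. The delicate point is the dichotomy ``exactly one'' versus ``both'' members of the $L$-packet being $\SL_2^*(E)$-distinguished in Cases~A and~B; this is not readable off \eqref{mainequation} alone and must be extracted by playing the value the formula predicts for a distinguished member against the additive constraint $\sum\dim_{\CC}\Hom_{\SL_2^*(E)}(\cdot,\mathbbm{1})=|X_\pi|$ over the packet, which itself rests on the branching of $\pi$ to $\SL_2(D)$ together with Theorem~\ref{mult one of Lu}.
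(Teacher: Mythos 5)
Your overall route is the paper's: establish $\dim\rho_{\pi_\flat}=1$ and $\mathcal{C}_\pi=1$ from \eqref{cardinality}, read off the multiplicities from \eqref{mainequation} using the values of $|X_\pi|$, $|Z_\pi|$, $|Y_\pi|$ computed in Section~\ref{example}, and settle the ``exactly one'' versus ``both'' dichotomy by playing the value that \eqref{mainequation} forces on a distinguished constituent against the additive constraint $\dim\Hom_{\SL_2^*(E)}(\pi_\flat,\mathbbm{1})+\dim\Hom_{\SL_2^*(E)}(\pi'_\flat,\mathbbm{1})=|X_\pi|$. That dichotomy argument is exactly what the paper does in Subsection~\ref{caseA} and Case~B.

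There is, however, one genuine non-sequitur in your first stage: you justify $|Z_\pi|\in\{1,2\}$ by appealing to $|F^\times/\mathcal{N}(E^\times)|=2$. That inference is invalid, because $Z_\pi$ is a subgroup of $\widehat{F^\times}$ (characters of $F^\times$ stabilizing $\pi$), not of $\widehat{F^\times/\mathcal{N}(E^\times)}$; there is no reason a member of $Z_\pi$ should be trivial on norms. You have conflated this with the argument of Remark~\ref{remsize+}, which bounds $|\Pi_\pi(\GL_n(D)^+)|$ by $|\widehat{\GL_n(D)/\GL_n(D)^+}|=|\widehat{F^\times/\mathcal{N}(E^\times)}|=2$ --- a different quotient. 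The correct source of $|Z_\pi|\le 2$ is the explicit computation at the start of Section~\ref{example}: for $\pi=\Ind_{B(D)}^{\GL_2(D)}(\chi_1\otimes\chi_2)$, a twist $\chi$ fixes $\pi$ only if $\{\chi\chi_1,\chi\chi_2\}=\{\chi_1,\chi_2\}$, which yields $Z_\pi=\{\mathbbm{1}\}$ or $Z_\pi=\{\mathbbm{1},\chi_1\chi_2^{-1}\}$. Since you in fact invoke those explicit computations in stage two, the fix is merely to replace your ``since'' clause with a reference to that computation; the rest of your argument then stands and reproduces the paper's proof.
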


\section{The split counterpart of our main theorem} \label{splitcouterpart}
This section is devoted to applying our machinery in Section \ref{mainresult} to the split forms. 
We consider $F$-algebraic groups $\bG=\GL_{2n}, \bH = \Res_{E/F}\GL_n, \bG_\flat=\SL_{2n}.$ 
Then $G=\bG(F)={\GL}_{2n}(F), H=\bH(F)={\GL}_n(E), G_\flat=\bG_\flat(F)={\SL}_{2n}(F)$ and
\begin{equation} \label{sl*-split}
H_\flat = G_{\flat} \cap H = {\SL}_{2n}(F) \cap {\GL}_n(E) = \{ g \in {\GL}_n(E) : N_{E/F}(\det(g))=1 \},
\end{equation}
where $N_{E/F}$ denote the norm map for the quadratic extension $E/F$.
Note that $H_{\flat} = \SL_{n}^{*}(E)$ is the same as in the case of the non-split inner forms in Section \ref{mainresult}.
So, we have the following diagram:
\begin{center} 
\begin{tikzcd}[column sep={3em}] \label{slnD-slnL-split}
{\SL}_{2n}(F)
  \arrow[dash]{rr}{\subset} 
  \arrow[dash]{dd}[swap]{\cup} 
  \arrow[dash]{rd}[inner sep=1pt]{\subset} 
& & 
{\GL}_{2n}(F) 
 \arrow[shorten >= 10pt, dash]{dd}[inner sep=1pt]{\cup}
\\
& 
{\SL}_{2n}(F) \cdot {\GL}_n(E)  =: {\GL}_{2n}(F)^+ 
        \arrow[dash]{ur}[swap,inner sep=1pt]{\subset} 
& &

\\
{\SL}^*_n(E)
  \arrow[swap,shorten >= 20pt, dash]{rr}{\subset} 
          \arrow[dash]{ur}[swap,inner sep=1pt]{\subset}
& & 
{\GL}_n(E)
  \arrow[dash]{lu}[swap,inner sep=1pt]{\supset} 
\\
\end{tikzcd}
\end{center}
The following lemma can be obtained easily similar to Proposition \ref{key pro} for which we skip the details. 
\begin{lm}
Let $\pi_{\flat} \in \Irr(\SL_{2n}(F))$ be a $\SL_{n}^{*}(E)$-distinguished representation.
Then there exists $\pi \in \Irr(\GL_{2n}(F))$ which is $\GL_{n}(E)$-distinguished and $\pi_{\flat} \subset {\Res}_{\SL_{2n}(F)}^{\GL_{2n}(F)} \pi$.
\end{lm}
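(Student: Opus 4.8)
The plan is to mimic the argument of Proposition \ref{key pro} verbatim, replacing the reduced-norm constraint on $D$ by the field-norm constraint $N_{E/F}$. First I would pick any $\pi \in \Irr(\GL_{2n}(F))$ with $\pi_\flat \subset \Res^{\GL_{2n}(F)}_{\SL_{2n}(F)}(\pi)$; such a $\pi$ exists since restriction from $\GL_{2n}(F)$ to $\SL_{2n}(F)$ is surjective on $\Irr$ by the standard theory recalled in Section \ref{restrictionsection}. Next I would endow $\Hom(\pi,\sigma)$ with its natural $\GL_n(E)$-action for $\sigma$ a representation of $\GL_n(E)$, take $\sigma = \mathbbm{1}$, and extract the decomposition
\[
\Hom_{\SL_n^*(E)}(\pi,\mathbbm{1}) = \bigoplus_{\alpha \in \widehat{E^\times/E^1}} m_\alpha\, \alpha,
\]
where now $E^1 = \{a \in E^\times : N_{E/F}(a) = 1\}$, so that $\GL_n(E)/\SL_n^*(E) \cong E^\times/E^1 \cong N_{E/F}(E^\times)$.

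The multiplicity-one input is still available: by Theorem \ref{mult one of Lu}, which explicitly covers $\GL_{2n}(F)$ as well as $\GL_n(D)$, we get $\dim_\CC \Hom_{\GL_n(E)}(\pi, \alpha) \le 1$ for every character $\alpha$ of $E^\times$, hence $m_\alpha \le 1$ for all $\alpha$. Since $\pi_\flat$ is $\SL_n^*(E)$-distinguished and $\Hom_{\SL_n^*(E)}(\pi_\flat,\mathbbm{1}) \hookrightarrow \Hom_{\SL_n^*(E)}(\pi,\mathbbm{1})$, there is some $\alpha \in \widehat{E^\times/E^1}$ with $m_\alpha = 1$, i.e.\ $\pi$ is $(\GL_n(E),\alpha)$-distinguished. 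The only point that needs a moment of care is extending $\alpha^{-1}$ from $E^\times$ to a character of an ambient group of $\GL_{2n}(F)$: in the quaternion case one used $E^\times/E^1 \hookrightarrow D^\times/\SL_1(D) \cong F^\times$ to extend $\alpha^{-1}$ to $\widetilde\alpha \in \widehat{D^\times}$. In the split case the analogue is that $\alpha^{-1}$, being trivial on $E^1$, factors through $N_{E/F} : E^\times \to F^\times$, so it is of the form $\beta \circ N_{E/F}$ for a character $\beta$ of $F^\times$; then $\widetilde\alpha := \beta \circ \det$ is a character of $\GL_{2n}(F)$ restricting to $\alpha^{-1}$ on $\GL_n(E)$ (using $\det|_{\GL_n(E)} = N_{E/F} \circ {\det}_E$). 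Twisting, $\pi \otimes \widetilde\alpha$ is then $\GL_n(E)$-distinguished and still contains $\pi_\flat$ upon restriction to $\SL_{2n}(F)$, because $\widetilde\alpha$ is trivial on $\SL_{2n}(F)$.

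I expect the only mild obstacle to be the bookkeeping around the norm map in the extension step — verifying that $\det$ restricted to $\GL_n(E) \subset \GL_{2n}(F)$ really is $N_{E/F}$ composed with the $E$-determinant, and that $\beta$ exists with $\beta \circ N_{E/F} = \alpha^{-1}$ on $E^\times$ (this is exactly the statement that a character of $E^\times$ trivial on $E^1$ descends to $N_{E/F}(E^\times)$, then extends arbitrarily to $F^\times$). Everything else is a direct transcription of the proof of Proposition \ref{key pro}, which is why the excerpt says the lemma "can be obtained easily" and omits the details.
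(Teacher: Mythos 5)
Your proof is correct and is exactly the argument the paper intends: it transcribes Proposition \ref{key pro} to the split case, with the only nontrivial adaptation being the extension step, and you handle that correctly by observing that a character of $E^\times$ trivial on $E^1$ factors through $N_{E/F}$ and hence extends to $\widehat{F^\times}$, then pulls back along $\det_{\GL_{2n}(F)}$ which restricts to $N_{E/F}\circ\det_E$ on $\GL_n(E)$. The paper itself omits the proof with the remark that it is "similar to Proposition \ref{key pro}," so your write-up supplies precisely the omitted details.
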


Note that the subgroup $\GL_{2n}(F)^{+}$ in the group $\GL_{2n}(F)$ is 2. 
Moreover, the restriction of an irreducible admissible representation of $\GL_{2n}(F)$ to the subgroup $\SL_{2n}(F)$ is multiplicity free (\cite[Theorem 1.2]{tad92}).
Hence we get the following lemma.
\begin{lm}
Let $\pi_{\flat} \in \Irr(\SL_{2n}(F))$ be $\SL_{n}^{*}(E)$-distinguished and $\pi \in \Irr(\GL_{2n}(F)$ such that $\pi_{\flat} \subset {\Res}^{\GL_{2n}(F)}_{\SL_{2n}(F)} \pi$. 
Let $\mathcal{C}_{\pi}$ be the number of irreducible representations of ${\GL}_{2n}(F)^+$ containing $\pi_\flat$ in ${\Res}^{{\GL}_{2n}(F)}_{{\GL}_{2n}(F)^+} \pi.$
Then,    
\[
\mathcal{C}_\pi =1.
\]
\end{lm}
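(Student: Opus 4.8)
The plan is to argue by contradiction, the single essential input being the multiplicity-one theorem of Tadi\'c \cite[Theorem 1.2]{tad92} for the restriction from $\GL_{2n}(F)$ to $\SL_{2n}(F)$. First I would record the transitivity of restriction through the intermediate group: since $\SL_{2n}(F) \subseteq \GL_{2n}(F)^+ \subseteq \GL_{2n}(F)$,
\[
{\Res}^{\GL_{2n}(F)}_{\SL_{2n}(F)} \pi = {\Res}^{\GL_{2n}(F)^+}_{\SL_{2n}(F)}\left( {\Res}^{\GL_{2n}(F)}_{\GL_{2n}(F)^+} \pi \right).
\]
Writing ${\Res}^{\GL_{2n}(F)}_{\GL_{2n}(F)^+} \pi \s m \big( \bigoplus_i \pi^+_i \big) $ as in \eqref{decomp of Res 1}, I would first note that the common multiplicity $m$ equals $1$: indeed $\widehat{\GL_{2n}(F)/\GL_{2n}(F)^+}$ has order $2$, so the argument of Remark \ref{remsize+} via \eqref{cardinality} gives $m=1$ (and at most two summands $\pi^+_i$); alternatively, $m=1$ is immediate from the fact that ${\Res}^{\GL_{2n}(F)}_{\SL_{2n}(F)} \pi$ is multiplicity-free, since any multiplicity in the intermediate restriction would descend.

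Next, since $\pi_\flat \subset {\Res}^{\GL_{2n}(F)}_{\SL_{2n}(F)} \pi$, it must occur in ${\Res}^{\GL_{2n}(F)^+}_{\SL_{2n}(F)} \pi^+_i$ for at least one index $i$, so $\mathcal{C}_\pi \geq 1$. Suppose now for contradiction that $\mathcal{C}_\pi \geq 2$. Then there exist two non-isomorphic constituents, say $\pi^+_1 \not\s \pi^+_2$, with $\pi_\flat \subset {\Res}^{\GL_{2n}(F)^+}_{\SL_{2n}(F)} \pi^+_1$ and $\pi_\flat \subset {\Res}^{\GL_{2n}(F)^+}_{\SL_{2n}(F)} \pi^+_2$. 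Restricting ${\Res}^{\GL_{2n}(F)}_{\GL_{2n}(F)^+} \pi \s \pi^+_1 \oplus \pi^+_2$ further down to $\SL_{2n}(F)$, we find that $\pi_\flat$ occurs in ${\Res}^{\GL_{2n}(F)}_{\SL_{2n}(F)} \pi$ with multiplicity at least $2$, contradicting \cite[Theorem 1.2]{tad92}. Hence $\mathcal{C}_\pi \leq 1$, and combined with $\mathcal{C}_\pi \geq 1$ we conclude $\mathcal{C}_\pi = 1$.

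I do not anticipate a serious obstacle here: all the weight is carried by Tadi\'c's multiplicity-one theorem, and the remaining content is the elementary Clifford-theoretic bookkeeping transcribed from Remark \ref{remsize+} to the split setting. The only point requiring a line of care is confirming that the intermediate restriction ${\Res}^{\GL_{2n}(F)}_{\GL_{2n}(F)^+} \pi$ is itself multiplicity-free with at most two constituents, which — as noted above — follows either from $[\GL_{2n}(F):\GL_{2n}(F)^+]=2$ together with \eqref{cardinality}, or directly from the multiplicity-freeness of ${\Res}^{\GL_{2n}(F)}_{\SL_{2n}(F)} \pi$.
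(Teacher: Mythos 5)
Your proof is correct and takes essentially the same approach as the paper, which simply records the index of $\GL_{2n}(F)^+$ in $\GL_{2n}(F)$ and invokes Tadi\'c's multiplicity-one theorem, then states the lemma without further elaboration. Your write-up supplies the missing Clifford-theoretic bookkeeping (transitivity of restriction, $m=1$, and the contradiction if two constituents of $\pi|_{\GL_{2n}(F)^+}$ both contained $\pi_\flat$) that the paper leaves implicit behind its ``hence.''
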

Moreover, we have the following from \cite[Theorem 1.2]{tad92} and \eqref{dim=dim=multi}.
\begin{lm} \label{lmdim1}
Let $\pi_{\flat} \in \Irr(\SL_{2n}(F))$. Then, $\dim_{\CC} \rho_{\pi_{\flat}} =1$.    
\end{lm}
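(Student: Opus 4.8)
The plan is to obtain the statement as an immediate consequence of Tadić's multiplicity-one theorem together with the Hiraga--Saito identification recorded in \eqref{dim=dim=multi}. First I would recall that, by the general restriction theory reviewed in Section \ref{restrictionsection} (applied in the case $d=1$, i.e. $D=F$; see \cite[Lemma 2.1]{gk82} and \cite[Proposition 2.4 \& Corollary 2.5]{tad92}), every $\pi_{\flat} \in \Irr(\SL_{2n}(F))$ is an irreducible constituent of ${\Res}^{\GL_{2n}(F)}_{\SL_{2n}(F)} \pi$ for some $\pi \in \Irr(\GL_{2n}(F))$, and that the quantity $\dim_{\CC} \rho_{\pi_{\flat}}$ is independent of the auxiliary choice of such a $\pi$ as well as of the chosen constituent within a fixed $L$-packet.

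Next I would invoke \cite[Theorem 1.2]{tad92}: the restriction ${\Res}^{\GL_{2n}(F)}_{\SL_{2n}(F)} \pi$ is multiplicity free for every $\pi \in \Irr(\GL_{2n}(F))$. In the notation of \eqref{decomp of Res 1} this says precisely that $\langle \pi \rangle_{\SL_{2n}(F)} = 1$, equivalently $\langle \pi_{\flat}, \pi \rangle_{\SL_{2n}(F)} = 1$ for the representation $\pi$ fixed above.

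Finally I would apply \eqref{dim=dim=multi} in the split situation $D=F$, $m=2n$ --- a case explicitly permitted in Section \ref{restrictionsection}, and in any event covered by the generalization in \cite[Theorem 4.22 and Remark 4.23]{choiymulti} --- to conclude
\[
\dim_{\CC} \rho_{\pi_{\flat}} = \langle \pi_{\flat}, \pi \rangle_{\SL_{2n}(F)} = 1,
\]
which is the assertion.

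There is essentially no real obstacle here: the argument is pure bookkeeping, and the only point that warrants a moment's attention is checking that the $L$-packet formalism of Section \ref{restrictionsection}, phrased there for the inner forms $\SL_m(D)$, specializes correctly to the split group $\SL_{2n}(F)$, so that the object $\rho_{\pi_{\flat}}$ entering the split counterpart of Theorem \ref{mainthem} coincides with the one governed by \eqref{dim=dim=multi}. Since $d=1$ is explicitly allowed in that discussion, this is immediate, and the multiplicity-freeness from \cite[Theorem 1.2]{tad92} then forces $\dim_{\CC} \rho_{\pi_{\flat}} = 1$.
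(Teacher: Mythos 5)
Your proof is correct and matches the paper's own argument exactly: the paper derives this lemma directly from Tadi\'c's multiplicity-one theorem \cite[Theorem 1.2]{tad92} combined with the identification $\langle \pi_{\flat}, \pi \rangle_{\SL_{2n}(F)} = \dim_{\CC} \rho_{\pi_{\flat}}$ from \eqref{dim=dim=multi}, specialized to $D=F$. Your write-up simply spells out the bookkeeping that the paper leaves implicit.
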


As an analog of Lemma \ref{lemma1} for the split case, given $\pi_{\flat} \in \Irr(\SL_{2n}(F))$ be a $\SL_{n}^{*}(E)$-distinguished representation, exactly one of the $\pi_{i}^{+}$ in the restriction
\[
{\Res}^{{\GL}_{2n}(F)}_{{\GL}_{2n}(F)^+} \pi \s  \bigoplus_{i} \pi^+_i
\]
is $\GL_{n}(E)$-distinguished, since $\pi$ is assumed to be $\GL_{n}(E)$-distinguished and by Theorem \ref{mult one of Lu}. 
Similar to Lemma \ref{lemma2}, for any $\pi_\flat, \pi_\flat' \in \Res^{{\GL}_{2n}(F)^{+}}_{\SL_{2n}(F)} (\pi^{+}),$
as $\pi^{+} \in \Irr({\GL}_n(D)^{+})$ is ${\GL}_n(E)$-distinguished and $\pi_{\flat}'$ and $\pi_{\flat}^{g}$ are conjugate by $g \in {\GL}_n(E),$ we have 
\[
{\Hom}_{\SL_n^*(E)} (\pi_{\flat},\mathbbm{1}) \cong {\Hom}_{\SL_n^*(E)} (\pi_{\flat}',\mathbbm{1}).
\]
Due to Lemma \ref{lmdim1} and $\langle \pi^+ \rangle_{\SL_{2n}(F)}=1,$ similar to \eqref{lastequality}, we thus have the following split counterpart of our main theorem (Theorem \ref{mainthem}) for which we omit the details.
\begin{thm} \label{mainthem-split}
Let $\pi_\flat \in \Irr({\SL}_{2n}(F))$ be $\SL_n^*(E)$-distinguished. Choose an ${\GL}_n(E)$-distinguished representation $\pi$ of ${\GL}_{2n}(F)$ such that  
\[
\pi_\flat \subset {\Res}^{{\GL}_{2n}(F)}_{{\SL}_{2n}(F)} (\pi).
\]
Then, we have
\begin{equation*} \label{mainequation-split}
{\dim}_{\CC} {\Hom}_{\SL_n^*(E)} (\pi_\flat,\mathbbm{1}) = \frac{|X_\pi|}{|Z_\pi/Y_\pi|}.
\end{equation*} 
\end{thm}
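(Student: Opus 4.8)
The plan is to rerun the proof of Theorem~\ref{mainthem} essentially verbatim, inserting the three simplifications peculiar to the split setting that have already been isolated above: $\mathcal{C}_\pi=1$; $\dim_\CC\rho_{\pi_\flat}=1$ by Lemma~\ref{lmdim1}; and $\langle\pi^+\rangle_{\SL_{2n}(F)}=1$, which is the multiplicity-freeness of $\Res^{\GL_{2n}(F)}_{\SL_{2n}(F)}$ from \cite[Theorem~1.2]{tad92}. Throughout, $X_\pi,Y_\pi,Z_\pi$ denote the analogues of Definition~\ref{def XYZ} in which one uses the identification $\GL_{2n}(F)^+/\SL_{2n}(F)\cong\GL_n(E)/\SL_n^*(E)\cong N_{E/F}(E^\times)\subset F^\times$, so that the condition $\chi|_{\mathcal{N}(E^\times)}=1$ defining $Y_\pi$ is read with $\mathcal{N}$ replaced by $N_{E/F}$.

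First I would apply Theorem~\ref{mult one of Lu} to $\pi\in\Irr(\GL_{2n}(F))$ and decompose $\Hom_{\SL_n^*(E)}(\pi,\mathbbm{1})$ over $\widehat{E^\times/E^1}$ as in \eqref{m_alpha}, obtaining $\dim_\CC\Hom_{\SL_n^*(E)}(\pi,\mathbbm{1})=|X_\pi|$. Next, by the split analogue of Lemma~\ref{lemma1}, exactly one constituent $\pi^+$ of $\Res^{\GL_{2n}(F)}_{\GL_{2n}(F)^+}\pi$ is $\GL_n(E)$-distinguished and it occurs with multiplicity one. Using the split analogue of Lemma~\ref{lemma2} (any two irreducible constituents of $\Res^{\GL_{2n}(F)^+}_{\SL_{2n}(F)}(\pi^+)$ are conjugate by an element of $\GL_n(E)$), together with the identity $\dim_\CC\Hom_{\SL_{2n}(F)}(\pi,\pi_\flat)=\dim_\CC\rho_{\pi_\flat}=1$ from \eqref{dim=dim=multi} and Lemma~\ref{lmdim1}, I would obtain the split form of \eqref{imp equation}, namely
\[
\dim_\CC\Hom_{\SL_n^*(E)}(\pi,\mathbbm{1})=\dim_\CC\Hom_{\SL_n^*(E)}(\pi_\flat,\mathbbm{1})\cdot\big|\{\tau_\flat\in\Irr(\SL_{2n}(F)):\tau_\flat\subset\pi^+|_{\SL_{2n}(F)}\}/\sim\big|.
\]

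Then, as in the proof of Theorem~\ref{mainthem}, I would compute $\dim_\CC\Hom_{\SL_{2n}(F)}(\pi^+,\pi^+)$ in two ways via \eqref{cardinality}: on one side it equals $|\{\chi\in\widehat{\GL_{2n}(F)^+/\SL_{2n}(F)}:\pi^+\otimes\chi\cong\pi^+\}|$, which unwinds to $|Z_\pi|/|Y_\pi|$ exactly as in the non-split case; on the other side it equals $\langle\pi^+\rangle_{\SL_{2n}(F)}^2\cdot\big|\{\tau_\flat:\tau_\flat\subset\pi^+|_{\SL_{2n}(F)}\}/\sim\big|$, which reduces to $\big|\{\tau_\flat:\tau_\flat\subset\pi^+|_{\SL_{2n}(F)}\}/\sim\big|$ since $\langle\pi^+\rangle_{\SL_{2n}(F)}=1$. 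Equating these two expressions and substituting into the displayed identity gives $|X_\pi|=\dim_\CC\Hom_{\SL_n^*(E)}(\pi_\flat,\mathbbm{1})\cdot|Z_\pi/Y_\pi|$, which is the assertion.

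I do not expect a genuine obstacle here: the analytic input (the multiplicity-one bound of Lu, Theorem~\ref{mult one of Lu}) and the structural input (Tadi\'c's multiplicity-freeness and the $L$-packet formalism recalled in Section~\ref{restrictionsection}) are already in hand, and every intermediate lemma of Section~\ref{mainresult} has a direct split counterpart stated in this section. The only point requiring care is the bookkeeping around $Z_\pi$ and $Y_\pi$: one must check that the index $[\GL_{2n}(F):\GL_{2n}(F)^+]=2$ and the identification $\GL_{2n}(F)^+/\SL_{2n}(F)\cong N_{E/F}(E^\times)$ are compatible with Definition~\ref{def XYZ}, so that the unwinding of $|\{\chi:\pi^+\otimes\chi\cong\pi^+\}|$ into $|Z_\pi|/|Y_\pi|$ goes through word for word.
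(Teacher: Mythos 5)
Your proposal is correct and follows essentially the same route as the paper: the paper explicitly says it obtains Theorem~\ref{mainthem-split} by substituting $\mathcal{C}_\pi=1$, $\dim_\CC\rho_{\pi_\flat}=1$ (Lemma~\ref{lmdim1}), and $\langle\pi^+\rangle_{\SL_{2n}(F)}=1$ into the argument of Theorem~\ref{mainthem} and \eqref{lastequality}, omitting the details. What you have written is a careful unpacking of exactly those omitted details, so there is nothing to add.
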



\end{document}